\documentclass{amsart}

\usepackage[T1]{fontenc}
\usepackage[utf8]{inputenc}
\usepackage{amsmath,amsthm,amsfonts,amscd,amssymb,eucal,latexsym,mathrsfs}
\usepackage{stmaryrd}
\usepackage{enumerate}
\usepackage{hyperref}
\usepackage[all]{xy}
\usepackage{etoolbox}

\usepackage{tikz,tikz-cd}
\usetikzlibrary{shapes.geometric}
\usetikzlibrary{arrows}
\usepackage{tqft}

\usetikzlibrary{positioning}
\usepackage{float}
\usepackage{MnSymbol}
\usetikzlibrary{matrix}

\makeatletter
\renewcommand\part{\@startsection {part}{1}{\z@}%
                                   {-3.5ex \@plus -1ex \@minus -.2ex}%
                                   {2.3ex \@plus.2ex}%
                                   {\newpage\centering\normalfont\bfseries}}
\makeatother

\newcommand{\toc}{\tableofcontents}

\theoremstyle{plain}
\newtheorem{theorem}{Theorem}[section]
\newtheorem*{theorem*}{Theorem}

\newtheorem{lemma}[theorem]{Lemma}

\theoremstyle{definition}
\newtheorem{remark}[theorem]{Remark}

\newtheorem{definition}[theorem]{Definition}


\usetikzlibrary{calc,graphs}
\usepackage{xcolor}
\usetikzlibrary{arrows,decorations.pathmorphing}

\hypersetup{colorlinks=false,linkbordercolor=black}


\newcommand{\p}{\varphi}

\newcommand{\e}{\varepsilon}

\newcommand{\IR}{\mathbb{R}}
\newcommand{\RI}{\mathbb{R}}

\newcommand{\ZI}{\mathbb{Z}}
\newcommand{\IN}{\mathbb{N}}
\newcommand{\NI}{\mathbb{N}}

\DeclareMathOperator{\inj}{\hookrightarrow}

\DeclareMathOperator{\lra}{\leftrightarrow}

\DeclareMathOperator{\Id}{\mathrm{Id}}

\newcommand{\IP}{\mathbb{P}}

\newcommand{\Bord}{\mathrm{Bord}}

\newcommand{\sq}{$\frac 7 4$}

\newcommand{\ts}{\textsection}

\newcommand{\ip}[1]{\langle#1\rangle} 



\author{Sylvain Barr\'e}
\author{Mika\"el Pichot}
\title{Random group cobordisms of rank \sq}
\setcounter{tocdepth}{1}

\begin{document}

\begin{abstract}
We construct a model of random groups of rank \sq, and show that in this model the random group has the exponential mesoscopic rank property.   
\end{abstract}

\maketitle

\section{Introduction}

This paper will define a model of random groups of rank \sq, and prove that in the given model the random group  has exponential mesoscopic rank. A group is said to be of rank \sq\ if it acts properly uniformly on a CAT(0) 2-complex all of whose vertex links are isomorphic to the Moebius--Kantor graph.

There are several models of random groups.  The standard constructions  start with a free (or hyperbolic) group and adjoin to it randomly chosen sets of relations. In general, the  random group  in these constructions is hyperbolic.  Gromov has proposed several  models for which this is the typical behavior.  We refer to \cite{Ollivier} for an introduction to the subject. 

Models of random groups of intermediate rank, in which the random group is generally non hyperbolic, were defined in \cite{random}. Roughly speaking, the random group in these models is defined by removing randomly chosen sets of relations in a given (non hyperbolic) group. It seems  difficult, however, to use this type of constructions to produce random groups of rank \sq.

The present paper takes a different approach and makes use of a category $\Bord_{\frac 7 4}$ of group cobordisms, introduced in \cite{surgery}, to define random groups of rank \sq. This is a kind of random surgery construction, where the random group is obtained by composing group cobordisms at random in the category $\Bord_{\frac 7 4}$. In particular, the fact that the random group is of rank \sq\ is tautological. 

This model relies on the existence of an infinite (finitely generated) semigroup of endomorphisms $E$  in the category $\Bord_{\frac 7 4}$, which we  introduce in \ts \ref{S - semigroup E} below (see Definition \ref{D - semigroup E}). The elements of $E$ are compositions of certain group cobordisms of rank \sq.
  To every element $\omega\in E$ is attached a group  $G_\omega$ of rank \sq. By random group of rank \sq, we  mean  the group $G_\omega$ associated with a randomly chosen element $\omega$ in the semigroup $E$.

The groups $G_\omega$ are in fact never hyperbolic. We have that:

\begin{theorem}\label{T - Z2 embeds}
The group $G_\omega$ contains $\ZI^2$ for all $\omega \in E$.
\end{theorem}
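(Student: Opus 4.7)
The plan is to exhibit an isometrically embedded Euclidean plane inside the CAT(0) 2-complex $X_\omega$ on which $G_\omega$ acts geometrically, together with two commuting hyperbolic isometries of $G_\omega$ whose axes span it. The Flat Torus Theorem (Bridson--Haefliger) then produces the desired $\ZI^2$ subgroup.

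Every $\omega \in E$ is a finite composition of generators of the semigroup $E$, each of which is a group cobordism of rank \sq\ to be introduced in Definition \ref{D - semigroup E}. By the very construction of $\Bord_{\sq}$ in \cite{surgery}, the generating cobordisms carry a ``horizontal'' translation symmetry, which I expect to pass to a hyperbolic element $a \in G_\omega$ whose axis $\ell_a$ is a bi-infinite combinatorial geodesic of $X_\omega$. The first step of the proof is to make this translation precise and to verify that $a$ indeed acts as a hyperbolic isometry of $X_\omega$: since $G_\omega$ acts properly cocompactly, it suffices to produce a nontrivial combinatorial translation fixing $\ell_a$ setwise.

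For the second isometry $b$, one should exploit the internal fiber structure of the objects of $\Bord_{\sq}$: these carry translation-like symmetries transverse to the horizontal direction, and one expects an element $b \in G_\omega$ commuting with $a$ whose axis $\ell_b$ is transverse to $\ell_a$. Granted $a$ and $b$, to show that they generate a $\ZI^2$ subgroup whose orbit of a base vertex spans a flat plane, one performs a local verification at each vertex of the combinatorial strip produced by $\langle a,b\rangle$: the six $2$-cells meeting at the vertex must correspond, in the Moebius--Kantor link, to a closed $6$-cycle of total length $2\pi$. Since the Moebius--Kantor graph has girth $6$, such $6$-cycles do exist, and the requirement reduces to a combinatorial constraint on the gluing data of the cobordisms generating $E$.

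The main obstacle is this last local verification: one must rule out any branching of the combinatorial flat as it is extended across the successive gluings making up $\omega$. By periodicity of the strip produced by $a$ and $b$, this reduces to a check on a finite fundamental domain, but this is precisely where the specific choice of generators of $E$ must intervene. The semigroup $E$ should be designed so that, for each generator, the flat condition at the interior vertices of the strip is compatible with a $6$-cycle in the Moebius--Kantor link; once this is arranged, the flat plane through $\ell_a$ and $\ell_b$ is canonical and the Flat Torus Theorem delivers $\ZI^2 \leq G_\omega$.
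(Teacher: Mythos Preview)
Your proposal identifies the correct target---an immersed torus in $X_\omega/\!\sim$, equivalently a flat plane in $\tilde X_\omega$ stabilised by two independent commuting translations---but it stops short of producing one, and the mechanism you suggest for producing it does not hold uniformly in $\omega$. You posit a ``horizontal'' element $a$ coming from the cobordism direction and a ``transverse'' element $b$ coming from the fibre direction of the collar, but the nerve of $C$ is not a circle (it is the graph $S$ of Figure~1), so there is no canonical transverse translation of the collar to promote to an element of $G_\omega$. You then say that the semigroup $E$ ``should be designed'' so that the local flat condition holds; but $E$ is already fixed by Definition~\ref{D - semigroup E}, and whether a given strip closes up to a flat torus depends nontrivially on the word $\omega$. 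In other words, the step you flag as ``the main obstacle'' is the entire proof, and a single uniform pair $(a,b)$ does not exist.

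The paper's argument makes this dependence on $\omega$ explicit. It introduces an auxiliary graph $R(\omega)$ whose edges are the ``small cylinders'' of $X_\omega$ (pairs of adjacent triangles with matching opposite labels), so that any cycle in $R(\omega)$ not supported on a single loop yields an immersed torus and hence $\ZI^2\hookrightarrow G_\omega$ (Lemma~\ref{L - cycle Z2}). The proof is then a finite case analysis: for $\omega$ a power of a single generator one reduces to the explicit complexes $V_1$, $V_0^1$ of \cite{rd}, and otherwise one locates, up to cyclic permutation and the flip of Lemma~\ref{L - flip}, a short subword $\omega'$ (such as $X_{01}X_{00}$ or $X_{00}Y_{00}Z$) whose graph $R(\omega')$ visibly contains the required cycle. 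The point you are missing is precisely this bookkeeping device $R(\omega)$ and the accompanying case split; without it one cannot verify the flat condition uniformly, because the torus one finds genuinely varies with $\omega$.
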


The proof shows that the surgery associated with the group cobordisms defining $G_\omega$ translates well into a surgery on tori in the associated complex. We  define a graph $R(\omega)$ for  $\omega\in E$ which is  shown to encode sufficiently many of these tori. This graph can be explicitly computed  using the fact that the cobordism compositions in $G_\omega$ correspond to simpler operations on $R(\omega)$.

Note that the above theorem holds for every $\omega$. We are interested in properties that only hold for ``most'' $\omega$. It is not hard to give a meaning to this expression (using the semigroup $E$) in our context (see \ts \ref{S - proof theorem 2}, Definition \ref{D - overwhelming proba}). Following standard terminology (introduced by Gromov), we  say that a property of $G_\omega$ holds  ``with overwhelming probability'' when the given condition holds. In the next two results, the definition of ``exponential rank'' and ``exponential mesoscopic rank'' are recalled in \ts\ref{S - proof theorem 2} and \ts\ref{S - proof theorem 3}
respectively.

\begin{theorem}\label{T - Random 74}
The group $G_\omega$ has exponential rank  with overwhelming probability.
\end{theorem}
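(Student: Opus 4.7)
My strategy is to build on the structural tools already developed in the proof of Theorem \ref{T - Z2 embeds}. That theorem provides a graph $R(\omega)$ which encodes a supply of flat tori in the CAT(0) $2$-complex on which $G_\omega$ acts, together with the crucial fact that the cobordism composition in the semigroup $E$ corresponds to a simpler, tractable operation on $R(\omega)$. For exponential rank, the quantity of interest is the growth rate, in $r$, of the number of isometrically embedded flat disks of radius $r$ centered at a vertex of the universal cover. The plan is to count these via $R(\omega)$.

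First I would refine the correspondence from the proof of Theorem \ref{T - Z2 embeds} so that it becomes a local counting statement: each path of combinatorial length $r$ starting at a suitable basepoint in $R(\omega)$ is matched with an isometrically embedded flat disk of radius comparable to $r$ around a vertex, in a sufficiently injective way that distinct paths give distinct flats. Since $R(\omega)$ was designed precisely to encode tori produced by the cobordism surgery, this refinement should amount to checking that the encoding respects the local geometry around the basepoint, not just the asymptotic existence of $\ZI^2$ subgroups.

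Next I would analyze the growth of paths in $R(\omega)$ as $\omega = \omega_1 \cdots \omega_n$ varies in $E$. The operations on $R(\omega)$ induced by the generators of $E$ should, for generic choices of generator, increase the out-degree at relevant vertices. The key quantitative input is that composition by a uniformly chosen generator has probability bounded below by some $p > 0$ of producing a bifurcation at a given vertex of $R(\omega)$. Iterating over $n$ composition steps then yields, in expectation and with overwhelming probability (in the sense of Definition \ref{D - overwhelming proba}), a number of paths of length $r$ bounded below by $c^r$ for some $c > 1$, once $n$ is large enough relative to $r$.

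Finally, I would upgrade the expectation estimate to a pointwise statement via a concentration argument: a Chernoff-type bound applied to the i.i.d.\ choices of the $\omega_i$ controls the number of bifurcations along each potential path, and a union bound over the (at most exponentially many) candidate paths gives the conclusion. The main obstacle I anticipate lies precisely in this last step. Controlling branching at one vertex of $R(\omega)$ is straightforward, but to obtain exponential rank one must control branching at many vertices simultaneously, and the corresponding good events are not independent because successive cobordism operations interact globally on $R(\omega)$. I expect this to be handled by separating $\omega$ into blocks, showing that each block contributes an independent amount of branching in a prescribed region of $R(\omega)$, and tuning the block length so that the probabilistic error is exponentially small in the number of blocks, hence still overwhelming after summing over all paths and basepoint choices.
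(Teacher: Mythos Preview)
Your plan is substantially more elaborate than what the paper does, and the complexity is a symptom of missing the key simplification. The paper's argument has two parts, both elementary. First, a \emph{deterministic} geometric criterion: if $R(\omega)$ contains two distinct (not necessarily simple) cycles that intersect and are not supported on single loops, then the associated cylinders in $\tilde X_\omega$ assemble into an isometric embedding $T\times\IR\hookrightarrow\tilde X_\omega$ with $T$ a bounded-valence tree of exponential growth; this immediately forces exponential rank. Second, this criterion is triggered as soon as $\omega$ contains one fixed short subword, for instance $Y_{00}Y_{00}$. The probabilistic step is then just a growth comparison: words in $E_n$ avoiding $Y_{00}Y_{00}$ grow like $(1+\sqrt 3)^n$ while $|E_n|=6\cdot 3^{n-1}$, so the avoidance probability is $\sim(g'/g)^n\to 0$. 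No concentration, no Chernoff, no block decomposition.

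Your outline does head in a workable direction---counting walks in $R(\omega)$ and arguing that bifurcations appear generically---but it obscures the fact that a \emph{single} bifurcation (one pair of intersecting cycles) already suffices for exponential rank, so there is nothing to accumulate and nothing to make independent. The ``main obstacle'' you anticipate, namely controlling branching at many vertices simultaneously via blocks, is a non-problem once you notice this. There is also a genuine worry in your formulation: you write that the exponential lower bound $c^r$ holds ``once $n$ is large enough relative to $r$'', but exponential rank is a statement about a \emph{fixed} $G_\omega$ (fixed $n$) as $r\to\infty$; any argument that lets $n$ depend on $r$ proves the wrong thing. The paper avoids this entirely because the subword criterion is an event depending only on $\omega$, after which the exponential-in-$r$ growth is automatic from the $T\times\IR$ embedding.
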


In particular, there exist infinitely groups of rank \sq\ having exponential rank. This answers a question raised in \cite[\ts4.3]{rd}.

The proof of  Theorem \ref{T - Random 74} expands on that of Theorem \ref{T - Z2 embeds}, exploiting further properties of $R(\omega)$.
In view of this result, it is natural to ask if the random group in this model has  in fact the more demanding ``exponential mesoscopic rank'' property. This is indeed the case: 

\begin{theorem}\label{T - Random 74 meso}
The group $G_\omega$ has exponential mesoscopic rank with overwhelming probability.
\end{theorem}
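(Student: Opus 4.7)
My plan is to reuse the apparatus developed for Theorem \ref{T - Random 74}, in particular the graph $R(\omega)$ that parameterises the tori and local flats arising from the cobordism surgery, and to strengthen its conclusion by distinguishing those local flats that do \emph{not} prolong to a Euclidean plane in the associated CAT(0) complex. Exponential mesoscopic rank asks for exponentially many (in a scale parameter $R$) centres at which the $R$-ball exhibits a genuine flat pattern while no biinfinite flat passes through the centre; Theorem \ref{T - Random 74} already produces enough flat patterns of size $R$, so the remaining content is to show that a positive exponential fraction of them are truly mesoscopic rather than asymptotic.

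First I would identify, at the level of $R(\omega)$, the combinatorial signature of an asymptotic flat: such a flat corresponds to a biinfinite walk in $R(\omega)$ whose local structure is compatible with the flat-plane pattern imposed by the Moebius--Kantor vertex links. By contrast, a mesoscopic flat of diameter $R$ corresponds to a walk of length comparable to $R$ which cannot be extended on at least one side. The key observation is that in $R(\omega)$ the walks obtained by stringing together cobordisms from the generating set of $E$ generically contain \emph{breakpoints}, that is, vertices at which the composition forces a non-flat branching in the link; such walks are thus cut off at the breakpoint and produce a genuinely mesoscopic flat.

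Next I would quantify both sides of the comparison. For the lower bound, the exponential count of flat walks of length $R$ used in the proof of Theorem \ref{T - Random 74} has to be intersected with the sub-collection of walks that possess a breakpoint within distance $O(R)$ on either side; the probabilistic input from $E$ guarantees that, with overwhelming probability over $\omega$, the density of breakpoints along a typical walk in $R(\omega)$ is bounded below by a positive constant. For the upper bound on asymptotic flats, I would show that the probability (in the sense of \ts\ref{S - proof theorem 2}) that a prescribed length-$R$ walk prolongs into a biinfinite compatible walk decays geometrically in $R$, so that even after a union bound over the exponentially many base walks an exponentially large residual of mesoscopic ones survives, uniformly in $R$.

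The main technical difficulty, which is already the crux of Theorem \ref{T - Random 74}, lies in the fine analysis of the link condition at a cobordism junction: one must verify that a generic composition of two cobordisms in the generators of $E$ actually destroys the local flat extension rather than producing an accidental alignment that prolongs the flat pattern. This amounts to a finite combinatorial check on the Moebius--Kantor link, propagated through the rules by which $R(\omega)$ is built from the cobordism composition. Once this local genericity is established, the exponential mesoscopic rank statement follows from the same Borel--Cantelli style argument in $E$ used to make the "overwhelming probability'' statement in Theorem \ref{T - Random 74}.
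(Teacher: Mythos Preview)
Your proposal has a genuine gap, rooted in a mismatch between the definition of exponential mesoscopic rank and the mechanism you outline. The mesoscopic profile $\varphi_A$ counts, at a \emph{fixed} point $A$, flat disks of radius $r$ centred at $A$ that lie in no flat plane; exponential mesoscopic rank asks that $\varphi_A(r)$ grow exponentially in $r$ for some single $A$. It is not a statement about ``exponentially many centres,'' and the only randomness is over $\omega\in E_n$: once $\omega$ is fixed the assertion about $\tilde X_\omega$ is deterministic, so the Borel--Cantelli and union-bound language over walks in $R(\omega)$ is misplaced. More seriously, the ``breakpoint'' heuristic points in the wrong direction. A strip that terminates does not yield a flat disk; to get a disk of radius $r$ one needs flat material all around $A$, and the content of mesoscopic rank is that such a disk can exist without lying in any plane. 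The actual mechanism is essentially the opposite of yours: one first exhibits a genuine flat plane $\Pi=\langle\gamma,\gamma'\rangle$ (so the relevant walks \emph{do} extend biinfinitely), together with an embedding $\gamma\times T\hookrightarrow\tilde X_\omega$ for a tree $T$ of exponential growth and a parallel strip meeting $\Pi$ in a half-plane; this configuration is then fed into the local analysis of \cite[Lemma~57]{rd}. None of this is produced by counting walks that fail to extend.

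The paper's overall architecture is also much simpler than your quantitative comparison of exponential counts. One isolates a single fixed subword $\omega_0=Y_{00}Y_{00}Y_{00}$ and proves directly, via the graph $R(\omega_0)$ and the reduction to \cite[Lemma~57]{rd}, that any $\omega$ containing $\omega_0$ already has exponential mesoscopic rank. The ``overwhelming probability'' step is then exactly the growth comparison from \ts\ref{S - proof theorem 2}: the growth rate $g'$ of words avoiding $\omega_0$ satisfies $g'<g$, so $\IP_n(\omega\text{ contains }\omega_0)\to 1$. What your outline is missing is precisely this key lemma linking a concrete subword to the geometric configuration (plane, branching tree, overlapping strip) required to invoke \cite{rd}; the breakpoint count does not supply it.
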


While this can't be proved using $R(\omega)$ alone, we will see that $R(\omega)$ can be used to reduce the proof to that of the same property given in \cite[Theorem 11(b)]{rd}. The goal of \ts \ref{S - proof theorem 3} is to  explain  this reduction.

\bigskip 
\bigskip 
\bigskip

\noindent{Acknowledgments.} Part of this work was carried out during a visit of the second author to Universit\'e  Bretagne Sud partially funded by an NSERC discovery grant of the second author.

\toc

\section{The semigroup $E$}\label{S - semigroup E}

We start by constructing a 2-complex $C$ as follows. Consider six oriented equilateral triangles, with labeled edges, given by the following set of triples 
\[
(x,a,d),\ (y,c,d),\ (z,c,b),\ (x',d,a),\ (y',b,a),\ (z',b,c).
\]
The 2-complex $C$ is defined by identifying the edges of these triangles respecting the labels and the orientations. 

The complex $C$ arises naturally given the constructions in \cite{surgery}. It is a \emph{collar}, as defined in \cite[\ts 3]{surgery}, in a 2-complex of nonpositive curvature. Collars form object sets in categories of group cobordisms.  In the present case, using Lemma 9.1 in  \cite{surgery}, we see that:

\begin{lemma} 
The 2-complex $C$ is an object in the category $\Bord_{\frac 7 4}$.
\end{lemma}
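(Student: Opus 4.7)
The plan is to apply \cite[Lemma 9.1]{surgery}, which the authors indicate will do the job. That lemma should be a combinatorial criterion for a 2-complex assembled from oriented, labeled triangles to be a collar in the sense of \cite[\ts 3]{surgery}, hence an object of $\Bord_{\frac 7 4}$.

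First, I would make the combinatorial structure of $C$ explicit. The four labels $a,b,c,d$ each occur in exactly three of the six triples and so yield interior edges along which three triangles meet, while the six labels $x,y,z,x',y',z'$ each occur once and so yield boundary edges. From the oriented triples one reads off the source and target of every edge, determines the vertex set of $C$, and records, at each vertex, the cyclic sequence of corners of triangles incident to that vertex.

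With this description in hand, I would confront the hypotheses of \cite[Lemma 9.1]{surgery}. Since the link of a vertex in a rank \sq\ complex is isomorphic to the Moebius--Kantor graph, one expects the criterion to demand that the link of each interior vertex of $C$ embed as a sub-graph of the Moebius--Kantor graph, and that the partial links at the boundary vertices assemble into the half-link data prescribed in \cite{surgery}. Because $C$ has only six triangles, this is a finite check and each case ought to be easy to verify by drawing a picture.

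The main obstacle is the link verification: at every interior vertex one must match each oriented corner of a triangle with an edge of the Moebius--Kantor graph so that consecutive corners along the link form edges of that graph, with the correct combinatorial pattern imposed by \cite{surgery}. Once this matching has been carried out, \cite[Lemma 9.1]{surgery} applies directly and delivers the conclusion.
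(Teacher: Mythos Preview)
Your proposal is correct and follows the same approach as the paper: the paper's entire argument is the single sentence ``using Lemma 9.1 in \cite{surgery}, we see that:'' preceding the lemma, with no further details. Your outline of the finite link-matching verification is a reasonable elaboration of what that citation entails, though the paper itself does not spell any of it out.
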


The goal of this section is to define a semigroup $E$ of endomorphisms of $C$ in the category $\Bord_{\frac 7 4}$ of group cobordisms of rank \sq. We will quote the facts we need from \cite{surgery}.

Being a collar, the interior of $C$ is isomorphic to a product space of a segment with a graph called the nerve. The nerve of $C$ can be computed explicitly as follows:
 \begin{figure}[H]
 \begin{tikzpicture}[baseline=2.7ex]
 \coordinate (A) at (0,0);
 \coordinate (B) at (1,0);
 \coordinate (C) at (1,1);
 \coordinate (D) at (0,1);
    \draw (A) -- (B);
    \draw (C) -- (D);
    \draw (A) to [bend left] (D);
    \draw (B) to [bend left] (C);
    \draw (A) to [bend right] (D);
    \draw (B) to [bend right] (C);
    \draw (A) node[left] {$d$};
    \draw (B) node[right] {$c$};
    \draw (C) node[right] {$b$};
    \draw (D) node[left] {$a$};
  \end{tikzpicture}
 \caption{The nerve of $C$}
 \end{figure}
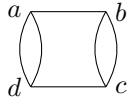
 
Lemma 6.3 in   \cite{surgery}  shows that there are exactly two minimal possibilities for the nerve  in a complex of nonpositive curvature. The graph $S$ shown above, and the graph $T$, which is the 1-skeleton of the tetrahedron. These minimal collars are said to be of type $ST$. The collar $C$ is a collar of type $S$.

The definition of the semigroup $E$ uses  two group cobordism classes of rank \sq\ defined as follows:

\begin{theorem}[{\cite[Theorem 10.1]{surgery}}] \label{T - cobordism 74}
There exist precisely two non filling group cobordisms of rank \sq\ with one vertex whose boundary collars are connected of type $ST$. Furthermore, the two cobordisms are self-dual, and their collars are pairwise isomorphic,  self-dual, and of type $S$.
\end{theorem}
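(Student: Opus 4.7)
The plan is to reduce the statement to a finite combinatorial enumeration at the unique interior vertex, and then to establish self-duality by exhibiting explicit involutions of the resulting cobordisms.

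I would begin by unpacking the local data at the interior vertex $v$. The rank \sq\ condition forces $\Lk(v)$ to be isomorphic to the Moebius--Kantor graph, so each of the two boundary collars contributes a subgraph of $\Lk(v)$ recording the triangles incident to that collar along $v$. Since both boundary collars are assumed minimal (``of type $ST$'' in the language of \cite{surgery}), these subgraphs must be copies of either $S$ or $T$. Thus the cobordism is determined by an ordered pair $(H_1, H_2)$ of disjoint embedded copies of $S$ or $T$ inside the Moebius--Kantor graph, together with an extension of the cellulation over the complementary edges that respects the edge-labeling of the collar $C$ and the CAT(0) requirement.

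The enumeration then proceeds modulo $\Aut(\Lk(v))$, whose order is $96$ and which acts transitively on oriented edges. For each of the type combinations $(S,S)$, $(S,T)$, $(T,S)$, $(T,T)$ I would list the $\Aut$-orbits of admissible ordered pairs, apply the labeling and CAT(0) constraints recalled from \cite{surgery} to rule out inadmissible extensions, and discard configurations whose completion is filling. I expect the $T$-type cases to be either filling or to violate the labeling constraint, since $T$ (unlike $S$) lacks parallel edges and therefore imposes rigid identifications that collapse the triangulation. The $(S,S)$ case should survive in exactly two $\Aut$-orbits, yielding the two cobordisms of the statement. Self-duality and the pairwise isomorphism of the four resulting collars then follow: one produces an involution swapping the two boundary collars from the symmetry of $H_1 \cup H_2$ inside the Moebius--Kantor graph, and the uniqueness (up to isomorphism) of the minimal type-$S$ collar already used to construct $C$ in \ts \ref{S - semigroup E} identifies all four collars.

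The main obstacle will be controlling the enumeration without combinatorial overflow: the Moebius--Kantor graph has many embedded copies of $S$ and $T$, and many ordered pairs yield abstractly admissible link decompositions that nevertheless fail to extend to a bona fide cobordism. To keep the case analysis manageable I would lean heavily on the rigidity already proved in \cite{surgery}, in particular on the classification of minimal collars and on the requirement that the oriented edge labels at the triangles of $v$ match those of $C$; these two constraints should eliminate most of the a priori possibilities before any detailed triangle-by-triangle check is needed.
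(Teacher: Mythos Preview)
This theorem is not proved in the present paper: it is quoted verbatim as \cite[Theorem 10.1]{surgery} and used as a black box to define the generators of the semigroup $E$. There is therefore no proof in the paper to compare your proposal against.

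That said, your outline is broadly in the spirit of the classification arguments of \cite{surgery}: reducing to the combinatorics of the link at the interior vertex, invoking the classification of minimal collars (types $S$ and $T$), and enumerating admissible pairs modulo the automorphism group of the Moebius--Kantor graph. Whether your expectation that the $T$-type configurations are always filling or obstructed is correct cannot be checked from the present paper; the actual argument in \cite{surgery} would need to be consulted. Your proposal is a plausible plan rather than a proof, and you correctly flag the enumeration as the bottleneck --- but the paper itself offers no independent route to compare with.
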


We write $X$ and $Y$ for these two classes, which correspond, respectively, in the notation of \cite[\ts 10]{surgery}, to the rank $\frac 3 2$ cobordism, and the   rank $2$ cobordism. 
The 2-complex $C$ is coincides with the boundary collars in these two classes:

\begin{lemma}
The boundary collars in $X$ and $Y$ are  isomorphic to $C$. 
\end{lemma}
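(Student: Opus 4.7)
The plan is to leverage what is already proved in Theorem \ref{T - cobordism 74} together with the classification of minimal collars from \cite{surgery}. The theorem asserts that the boundary collars of $X$ and $Y$ are connected of type $S$; what remains is to match them with the explicit 2-complex $C$. My first step would be to confirm that $C$ is itself a connected, one-vertex collar of type $S$: the nerve computation was carried out in the figure immediately preceding Theorem \ref{T - cobordism 74}, so the nerve of $C$ is indeed the graph $S$, and connectedness and the vertex count can be read off directly from the list of triples $(x,a,d),(y,c,d),(z,c,b),(x',d,a),(y',b,a),(z',b,c)$.

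Next I would invoke a rigidity statement for minimal collars: a connected collar of type $S$ with a single vertex is essentially determined by its nerve together with the combinatorial gluing data at the vertex. The graph $S$ has four vertices of degree $3$ and six edges, which forces any such collar to have four boundary edge-classes (here $a,b,c,d$) and six triangles (one per nerve edge). The requirement that the link at the unique vertex be isomorphic to the Moebius--Kantor graph (rank $\tfrac 7 4$) together with self-duality of the collar should cut the combinatorial possibilities down to a unique pattern of triangle gluings, which is precisely the one recorded by the six triples defining $C$.

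Having established this uniqueness, I would then identify the boundary of $X$ (respectively $Y$) with $C$ concretely. Since Theorem \ref{T - cobordism 74} asserts that the collars of $X$ and $Y$ are pairwise isomorphic, it suffices to do this for one of them. Reading off the boundary triangulation from the explicit rank $\tfrac 3 2$ and rank $2$ cobordisms constructed in \cite[\ts 10]{surgery}, one matches the six boundary triangles and their labeled edges with the triples of $C$, up to relabeling the edges $a,b,c,d$ and the triangles $x,y,z,x',y',z'$.

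The main obstacle is the uniqueness step: one must rule out any competing combinatorial pattern of gluing six oriented triangles around a single vertex that still yields a type $S$ collar with a Moebius--Kantor link. This is not spelled out in the excerpt and requires careful appeal to the general collar machinery in \cite[\ts 3, \ts 6]{surgery}; once it is in hand, the lemma follows immediately by comparing combinatorial invariants.
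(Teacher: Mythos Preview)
The paper gives no proof of this lemma; it is left as an immediate consequence of the explicit presentations of the two cobordisms from \cite[\ts 10]{surgery}, which are in fact reproduced verbatim in Remark~2) following Lemma~\ref{ L - generator equivalence}. From those lists of triples one reads off directly that the left collar of $X_{00}$ consists of the faces $(x,a,d),(y,c,d),(z,c,b),(4,d,a),(3,b,a),(2,b,c)$, which is isomorphic to $C$ under $x'\mapsto 4$, $y'\mapsto 3$, $z'\mapsto 2$; the other three collars are handled the same way. That is the entire argument.

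Your step~3 is exactly this, and it alone suffices. The uniqueness detour in your steps~1--2 is unnecessary and, as you note yourself, not established anywhere in the paper or in \cite{surgery}. More importantly, the justification you sketch for it is off: the Moebius--Kantor link condition is a condition on vertices of the ambient rank~$\tfrac74$ complex, not on the collar itself, so it cannot be used to pin down the collar's combinatorics in isolation. Drop the rigidity claim and keep only the direct matching; then your proof coincides with what the paper does implicitly.
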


We will define several arrows---the generators of $E$---in the category $\Bord_{\frac 7 4}$ by combining the above theorem with the following lemma.

\begin{lemma}\label{L - involution}
The complex $C$ admits an automorphism of order 2, which is defined by the transformation $y\lra -y$, $y'\lra -y'$,  $x\lra -z$, $x'\lra -z'$, $a\lra -b$ and $c\lra -d$, where the notation ``$-y$'' indicates that the orientation of edge $y$ is reversed.
\end{lemma}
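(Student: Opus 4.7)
The plan is to work entirely at the level of oriented 1-cells and oriented 2-cells of $C$. The prescription of the lemma immediately gives an involution $\varphi$ on the set of oriented 1-cells: the four pairs $\{x,z^{-1}\}$, $\{x',(z')^{-1}\}$, $\{a,b^{-1}\}$, $\{c,d^{-1}\}$ are swapped, while the oriented edges $y$ and $y'$ are reversed. Since every generator is paired with a unique partner or with its own reversal, this is patently an involution on oriented 1-cells. To extend $\varphi$ to a combinatorial automorphism of $C$, it suffices to show that its action on each of the six oriented triangles
\[
(x,a,d),\ (y,c,d),\ (z,c,b),\ (x',d,a),\ (y',b,a),\ (z',b,c)
\]
returns another triangle in this list, up to cyclic rotation and reversal of orientation.

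This is the one substantive step, and it reduces to a short case-by-case check. For example, $\varphi(x,a,d) = (z^{-1},b^{-1},c^{-1})$, which is a cyclic rotation of the reversal of $(z,c,b)$. One verifies analogously that $(x',d,a)$ and $(z',b,c)$ are exchanged with reversed orientation, and that $(y,c,d)$ and $(y',b,a)$ are each mapped to their own reversal. Once this table of images is in hand, the gluing data of $C$---edges identified by matching labels and orientations---is respected by $\varphi$ \emph{tautologically}, because $\varphi$ was defined at the label level rather than locally inside each triangle. Consequently, no separate compatibility needs to be checked at vertices: any two triangle-corners that are identified in $C$ lie on matching endpoints of a pair of identified edges, and $\varphi$ carries this pair to another pair of identified edges.

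The order-$2$ property of the resulting automorphism of $C$ is inherited from the involution property already observed on oriented 1-cells. The main (and only) obstacle is the bookkeeping in the six-triangle verification: one has to keep careful track of simultaneous cyclic rotations and orientation reversals of edge triples, but the procedure is entirely mechanical and presents no conceptual difficulty.
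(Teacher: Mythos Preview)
Your proof is correct and follows essentially the same approach as the paper's: both verify that the prescribed involution on oriented edges permutes the six faces (up to cyclic rotation and orientation reversal), with $(x,a,d)\leftrightarrow(z,c,b)$, $(x',d,a)\leftrightarrow(z',b,c)$, and $(y,c,d)$, $(y',b,a)$ each sent to themselves. Your write-up is more explicit about the bookkeeping and about why compatibility at vertices is automatic, but the underlying verification is identical.
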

\begin{proof}
It is easy to check that the given map permutes the faces $(x,a,d)\lra (z,b,c)$, $(x',a,d)\lra (z',b,c)$, and that it preserves $(y,c,d)$ and $(y',b,a)$.
\end{proof}

Let us choose representatives for $X$ and $Y$ (again denoted $X$ and $Y$) in their isomorphism classes, together with identifications 
\[
L_X,R_X\colon C\inj X \text{ and }L_Y,R_Y\colon C\inj Y
\]
between $C$ and the left and right collar boundary components of $X$ and $Y$, respectively. 
The arrows in $\Bord_{\frac 7 4}$ will be denoted
$X_{ij}$ and $Y_{ij}$, for $i,j=0,1$. They correspond to prescribing the following injective boundary maps for $X$ and $Y$, respectively: 
\[
L_X\circ\sigma^{i} \colon C\inj X, \ \ \ R_X\circ\sigma^{j} \colon C\inj X
\]
for $X_{ij}$, and 
\[
L_Y\circ\sigma^{i} \colon C\inj Y, \ \ R_Y\circ\sigma^{j} \colon C\inj Y.
\]
for $Y_{ij}$, where $\sigma$ is the involution given by Lemma  \ref{L - involution}. 

We can now define the semigroup $E$ as follows.

\begin{definition}\label{D - semigroup E}
The  semigroup $E$ is the endomorphism semigroup of $C$ generated by the six arrows $X_{ij}$ and $Y_{ij}$ where $i,j=0,1$.
\end{definition}

We show in Lemma \ref{ L - generator equivalence 0} and Lemma \ref{ L - generator equivalence} below that the number of arrows is indeed six.  

Recall that two group cobordisms  $X\colon C\to D$ and $Y\colon C\to D$ are said to be equivalent if there is a 2-complex isomorphism $\p \colon X\to Y$, such that $\p\circ L_X = L_Y$ and $\p\circ R_X = R_Y$ (\cite[\ts 8]{surgery}). We write $X\sim Y$ for this equivalence relation. The arrows in the group cobordism category consists of equivalence classes of group cobordisms.

\begin{lemma}\label{ L - generator equivalence 0}
$Y_{00}\sim Y_{11}$ and $Y_{01}\sim Y_{10}$. 
\end{lemma}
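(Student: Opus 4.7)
The plan is to reduce both asserted equivalences to the existence of a single involution of the 2-complex $Y$ and then to exhibit that involution by extending $\sigma$ across the interior of $Y$. First I would unwind the definition of $\sim$: the equivalence $Y_{00}\sim Y_{11}$ requires a 2-complex isomorphism $\varphi\colon Y\to Y$ with $\varphi\circ L_Y=L_Y\circ\sigma$ and $\varphi\circ R_Y=R_Y\circ\sigma$, while $Y_{01}\sim Y_{10}$ requires $\varphi\circ L_Y=L_Y\circ\sigma$ together with $\varphi\circ R_Y\circ\sigma=R_Y$. Because $\sigma$ has order $2$, the latter rewrites as $\varphi\circ R_Y=R_Y\circ\sigma$, identical to the condition for $Y_{00}\sim Y_{11}$. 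Hence both statements follow at once from producing one cellular involution $\tau$ of $Y$ whose restriction to each boundary collar agrees with $\sigma$.

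To construct $\tau$, I would work with the explicit combinatorial description of the rank $2$ cobordism $Y$ coming from the proof of Theorem \ref{T - cobordism 74} in \cite[\ts 10]{surgery}. That theorem tells us that $Y$ has a single vertex, that both boundary collars are copies of $C$ of type $S$, and that $Y$ is self-dual; taken together these facts severely constrain the list of triangular 2-cells attached in the interior. I would then prescribe $\tau$ to act as $\sigma$ on each of the two boundary copies of $C$ (via Lemma \ref{L - involution}), to fix the unique interior vertex, and to permute the interior edges and triangles in the way forced by the boundary data.

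The main obstacle is the final consistency check: after applying the involution of Lemma \ref{L - involution} simultaneously to both boundary copies of $C$, the set of interior triangles of $Y$ must be carried to itself with labels and orientations respected. This is analogous to the face-permutation verification used in the proof of Lemma \ref{L - involution} for $C$ itself, but carried out on the slightly larger complex $Y$. Once the interior 2-cells of $Y$ are written out explicitly, the verification reduces to matching each such triangle with its image under the edge involution $y\leftrightarrow -y$, $y'\leftrightarrow -y'$, $x\leftrightarrow -z$, $x'\leftrightarrow -z'$, $a\leftrightarrow -b$, $c\leftrightarrow -d$, and the self-duality of $Y$ from Theorem \ref{T - cobordism 74} makes it plausible that this matching succeeds. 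With $\tau$ in hand, the reduction in the first paragraph yields both equivalences simultaneously.
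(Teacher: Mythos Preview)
Your reduction is correct and matches the paper's approach exactly: both equivalences follow from a single automorphism of $Y$ restricting to $\sigma$ on each boundary collar, and the paper confirms this by writing down that automorphism explicitly as the permutation $(1,16)(2,15)(3,14)(4,13)(5,12)(6,11)(7,10)(8,9)$ on the core of $Y$ (in the labelling of \cite[Fig.~4]{surgery}). Two minor remarks: $Y$ has a single vertex in total (not a separate ``interior vertex''), and the self-duality of $Y$ is not really what makes the extension work---the existence of the involution is a separate combinatorial fact that must be, and in the paper is, verified directly.
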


\begin{proof}
It follows from the description of $Y$ in \cite[\ts 10]{surgery} that there exists an automorphism of $Y$ which induces the involution $\sigma$ on $L_X(C)$ and $R_X(C)$. In the notation of \cite[Fig.\ 4]{surgery}, this automorphism acts on the core of $Y$ by the permutation:
\[
(1,16)(2,15)(3,14)(4,13)(5,12)(6,11)(7,10)(8,9).
\]
By construction, this automorphism induces an equivalence between $Y_{00}$ and $Y_{11}$, and an equivalence between $Y_{01}$ and $Y_{10}$. 
\end{proof}

\begin{lemma}\label{ L - generator equivalence}
The cobordisms $X_{ij}$, for $i,j\in \{0,1\}$, $Y_{00}$, and $Y_{01}$, are pairwise non equivalent.  
\end{lemma}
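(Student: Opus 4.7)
The plan is to split the six arrows into the $X$-class $\{X_{ij}\}$ and the $Y$-class $\{Y_{00},Y_{01}\}$---which cannot mix, because the underlying $2$-complexes $X$ and $Y$ are non-isomorphic ($X$ being the rank $\frac 3 2$ cobordism and $Y$ the rank $2$ one, cf.\ \cite[\ts 10]{surgery})---and then to analyze the symmetries of each class separately. For $Z\in\{X,Y\}$, an equivalence $Z_{ij}\sim Z_{kl}$ corresponds to an automorphism $\varphi$ of the $2$-complex $Z$ that setwise preserves each of the boundary collars $L_Z(C)$ and $R_Z(C)$, and induces $\sigma^{k-i}$ on the first and $\sigma^{l-j}$ on the second. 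The set $\Sigma_Z\subseteq (\ZI/2\ZI)^2$ of pairs realized in this way is a subgroup, and the equivalence classes among the $Z_{ij}$ are exactly its translation orbits in $(\ZI/2\ZI)^2$.

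For the $X$-class, the plan is to argue from the explicit face list of $X$ recorded in \cite[\ts 10]{surgery} that any automorphism of $X$ preserving both boundary collars setwise is the identity; equivalently $\Sigma_X=\{(0,0)\}$. This would give four distinct classes $X_{00},X_{01},X_{10},X_{11}$, as needed. The mechanism is that $X$ is built from a small, fixed number of triangles with pairwise distinct edge-labellings, so that pointwise fixing of either boundary collar propagates quickly across the core.

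For the $Y$-class, the automorphism constructed in the proof of Lemma~\ref{ L - generator equivalence 0} already shows $(1,1)\in\Sigma_Y$; the remaining point is that $(1,0)\notin\Sigma_Y$, i.e.\ no automorphism of $Y$ can restrict to the identity on $R_Y(C)$ while inducing $\sigma$ on $L_Y(C)$. I would argue by contradiction: assuming such a $\varphi$, use the triangle list of the core from \cite[Fig.\ 4]{surgery} to show that every face of $Y$ incident to $R_Y(C)$ is stabilized pointwise by $\varphi$, and then propagate this rigidity through the core along shared interior edges until $L_Y(C)$ is reached, whereupon $\varphi$ would have to act trivially there too, contradicting the hypothesis that it induces $\sigma$.

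The main obstacle is this last rigidity-propagation step. The complex $Y$ does carry the nontrivial automorphism of Lemma~\ref{ L - generator equivalence 0}, realized by an intricate permutation of the $16$ core triangles, so the propagation must be combinatorially sharp enough to permit that ``diagonal'' symmetry while excluding its ``off-diagonal'' variant. If the face-by-face analysis turns out to be unwieldy, a cleaner alternative is to exhibit a local invariant of the inclusion $R_Y(C)\inj Y$---for instance a vertex-link type, or the isomorphism type of a small metric neighborhood of a chosen vertex---that is not invariant under precomposition with $\sigma$, and then verify that it matches the corresponding invariant on $L_Y(C)$ for $Y_{00}$ but not for $Y_{01}$, distinguishing the two classes directly.
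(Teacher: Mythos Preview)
Your approach is correct and shares the same underlying mechanism as the paper's proof---rigidity propagation from a boundary collar through the core---but the paper organizes the casework more economically. Rather than computing $\Sigma_X$ and $\Sigma_Y$ separately via face-by-face analysis, the paper first treats the ``off-diagonal'' cases $Z_{ij}\sim Z_{ik}$ and $Z_{ij}\sim Z_{kj}$ uniformly for $Z\in\{X,Y\}$: if an equivalence $\varphi$ restricts to the identity on one collar, then it is the identity on the core (which is attached along that collar's inner boundary), hence it fixes pointwise the inner boundary of the other collar, and since a collar is a product, $\varphi$ is the identity on that collar too. This disposes of $Y_{00}\not\sim Y_{01}$ immediately, without any appeal to the face list of $Y$ or to the delicate point you flag about the diagonal symmetry of $Y$. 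Only the genuinely diagonal $X$-cases ($X_{01}$ vs.\ $X_{10}$, and implicitly $X_{00}$ vs.\ $X_{11}$) then require the specific fact that the core of $X$ has no nontrivial automorphism stabilizing the two roots, which is read off from \cite[Fig.~3]{surgery}. Your $\Sigma_Z$ framing is conceptually clean and would certainly work, but it commits you to the explicit combinatorics of $Y$ that the paper's collar-product argument sidesteps.
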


\begin{proof}
By \cite[Theorem 10.1]{surgery}, we have $X_{ij}\not \sim Y_{kl}$ for $i,j,k,l\in \{0,1\}$. Let $Z$ refer to either $X$ or $Y$. If $Z_{ij}\sim Z_{ik}$, by an equivalence $\p\colon Z_{ij}\to Z_{ik}$, then the restriction of $\p$ to $L_{Z_{ij}}(C)$ coincides with $L_{Z}\circ \sigma^i\circ \sigma^{-i}\circ L_{Z}^{-1}=\Id$. Therefore, $\p$ restricts to the identity map from the core of $Z$ to itself. In particular, it induces an automorphism of $R_Z(C)$ which fixes its left topological boundary pointwise. Since the only such automorphism is the identity, it follows that $j=k$. The same argument implies that $Z_{ik}\not \sim Z_{ik}$. To prove the lemma, it remains to show that $X_{01}\not\sim X_{10}$. This follows from the fact that the only automorphism of the core of $X$ stabilizing the roots corresponding to the left and right collars is the identity (see \cite[Fig.\ 3]{surgery}).
\end{proof}

\begin{remark}
1) The semigroup $E$ is a subsemigroup of the  monoid of endomorphisms of $C$ generated by $X_{00}$, $Y_{00}$, and an additional element, which corresponds to $C$ where the left and right boundary morphisms are  given respectively by the identity and the order 2 automorphism of Lemma \ref{ L - generator equivalence 0}. This monoid is isomorphic to a free product of the integers $\IN$ with the direct product $\ZI/2\ZI\times \IN$. 

2) The cobordism $X_{00}$ and $Y_{00}$ can be described explicitly by following the proof of Theorem \ref{T - cobordism 74} in \cite[\ts 10]{surgery}. They are given by
\[
(x,a,d),(y,c,d), (z,c,b),\ (1,1,2),\ (2,a',d'),(4,c',d'),(3,c',b')
\]
\[
(4,d,a),(3,b,a), (2,b,c),\ (1,3,4),\ (x',d',a'),(y',b',a'),(z',b',c')
\]
and
\[
(x,a,d),(y,c,d), (z,c,b),\ (1,2,3),\ (4,a',d'),(2,c',d'),(1,c',b')
\]
\[
(1,d,a),(3,b,a), (4,b,c),\ (2,4,3),\ (x',d',a'),(y',b',a'),(z',b',c')
\]
where $L_{X_{00}},R_{X_{00}},L_{Y_{00}},R_{Y_{00}}$ are the obvious maps.
\end{remark}

Recall from \cite[\ts 10]{surgery} that a group of type $A$ is said to be \emph{accessible by surgery} if it is isomorphic to the fundamental group of quotient of the form $X/\sim$, where $X\colon C\to C$ is a cobordism of type $A$, $C$ is an object in $\Bord_A$, and $\sim$ identifies the two copies of $C$ in $X$. 

Let $\Lambda_A$ denote the set of pointed isomorphism classes of pointed complexes of type $A$. For the type \sq, we have a map $\omega\to X_\omega$
taking a word $\omega\in E$ in the generators $X_{ij}$ (for $i,j\in \{0,1\}$), $Y_{00}$, and $Y_{01}$ to the pointed topological space $X_\omega:=(X/\sim,*)$, which is an element in $\Lambda_{7/4}$, where $X\colon C\to C$ is a representative for the composition of cobordisms $w=w(X_{ij},Y_{00}, Y_{01})$. The associated fundamental group $G_\omega$ is a group of rank \sq:

\begin{definition}
 $G_\omega:= \pi_1(X_\omega)$.
 \end{definition}
 
By construction, these groups of rank \sq\ are accessible by surgery.

\section{Proof of Theorem \ref{T - Z2 embeds}}\label{S - theorem 1}

Let $\omega\in E$. We define a graph $R(\omega)$ as follows. The vertex set of $R(\omega)$ is the subset of the set of labels on the edges of $X_\omega$ which are attached to at least an edge of $R(\omega)$. An edge in $R(\omega)$ between two vertices, with  labels $a$ and $b$, say, is a pair of adjacent triangles two of whose opposite edges have identical labels, and  the other two have labels $a$ and $b$.

The graph $R(\omega)$ can be used to describe the effect of the surgery on the ``small cylinders'' inside the cobordism components, $X$ and $Y$, of $X_\omega$. The group surgeries in $X(\omega)$ descend to simpler graph surgeries in $R(\omega)$, and the concatenation of these cylinders can be read  on $R(\omega)$. For most $\omega\in E$, it can  be determined directly on the graph $R(\omega)$ itself that the associate complex $X_\omega/\sim$ contains tori.

For example, the two graphs $R(X_{00})$ and $R(Y_{00})$ are given by:

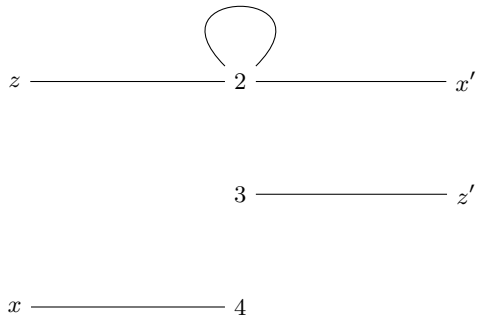
\begin{figure}[H]
\begin{tikzpicture}[scale = 3]
\tikzstyle{every node}=[font=\small]
\node (v202) at (1,.5){$z'$};
\node (v2) at (0,1){$2$};
\node (v4) at (0,0){$4$};
\node (v200) at (1,1){$x'$};
\node (v100) at (-1,0){$x$};
\node (v3) at (0,.5){$3$};
\node (v102) at (-1,1){$z$};
\draw[solid,thin,color=black,-](v2) to[loop]  (v2);
\draw[solid,thin,color=black,-] (v100) -- (v4);
\draw[solid,thin,color=black,-] (v102) -- (v2);
\draw[solid,thin,color=black,-] (v2) -- (v200);
\draw[solid,thin,color=black,-] (v3) -- (v202);
\end{tikzpicture}
\caption{The graph $R(X_{00})$}
\end{figure}

\begin{figure}[H]
\begin{tikzpicture}[scale = 3]
\tikzstyle{every node}=[font=\small]
\node (v100) at (-1,0){$x$};
\node (v200) at (1,1){$x'$};
\node (v202) at (1,0){$z'$};
\node (v4) at (0,1){$4$};
\node (v1) at (0,0){$1$};
\node (v102) at (-1,1){$z$};
\draw[solid,thin,color=black,-] (v1) -- (v4) ;
\draw[solid,thin,color=black,-] (v100) -- (v1) ;
\draw[solid,thin,color=black,-] (v102) -- (v4) ;
\draw[solid,thin,color=black,-] (v4) -- (v200) ;
\draw[solid,thin,color=black,-] (v1) -- (v202) ;
\end{tikzpicture}
\caption{The graph $R(Y_{00})$}
\end{figure}
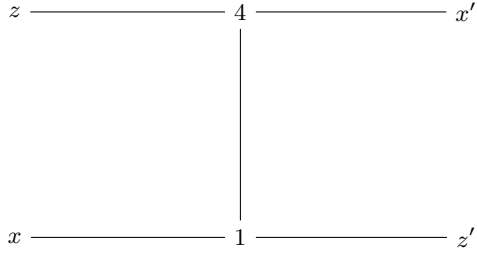

The surgery constructions used to build $X_\omega$  correspond to decompositions of the graph $R(\omega)$ in terms of the two graphs $R(X_{00})$, $R(Y_{00})$, taking into account the effect of the twist.

\begin{lemma}\label{L - subword}
If $\omega'$ is a subword of $\omega$ then $R({\omega'})$ embeds in $R(\omega)$. 
 \end{lemma}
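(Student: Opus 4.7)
The plan is to write $\omega = \omega_1 \omega' \omega_2$ (possibly with $\omega_1$ or $\omega_2$ empty) and to exhibit $R(\omega')$ as a subgraph of $R(\omega)$ via the natural inclusion of composite cobordisms. If $W_\omega \colon C \to C$ denotes the pre-quotient composite cobordism of $\omega$, so that $X_\omega = W_\omega / \sim$, then composition in $\Bord_{\frac 7 4}$ yields $W_\omega = W_{\omega_1} \circ W_{\omega'} \circ W_{\omega_2}$, each $\circ$ gluing the right collar of one factor to the left collar of the next via the identity on $C$. In particular $W_{\omega'}$ embeds in $W_\omega$ as a labeled 2-subcomplex, preserving triangles, edges, and edge-labels, and the quotient $W_\omega \to X_\omega$ only identifies the outermost pair of $C$-copies, leaving the interior of $W_{\omega'}$ untouched. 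I would then define $\phi \colon R(\omega') \to R(\omega)$ on vertices by sending each label to the same label in $X_\omega$, and on edges by sending each pair of adjacent triangles realizing the opposite-edge labeling condition to its image in $X_\omega$.

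For edges of $R(\omega')$ arising from pairs whose shared edge is internal to $W_{\omega'}$, both the adjacency and the labeling condition transfer directly through $W_{\omega'} \hookrightarrow W_\omega \to X_\omega$; this produces the required edge of $R(\omega)$, and injectivity follows at once from the injectivity of the subcomplex inclusion on triangles together with the fact that the quotient $W_\omega \to X_\omega$ does not identify distinct triangles inside $W_{\omega'}$.

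The main obstacle is handling edges of $R(\omega')$ whose two triangles are adjacent only via the identification $L_{W_{\omega'}}(C) \sim R_{W_{\omega'}}(C)$ in $X_{\omega'}$, since inside $X_\omega$ those two boundary copies of $C$ are glued respectively to $W_{\omega_1}$ and $W_{\omega_2}$ rather than to one another. To resolve this I would use the rigidity of the labeling of $C$, which is common to every arrow of $E$: the opposite-edge labeling condition across a boundary $C$ is determined by the fixed labels of $C$ together with the single triangle of $W_{\omega'}$ meeting $C$ on one side, and any generator of $E$ attached on the other side supplies a triangle whose boundary labels match. Tracing around the cyclic structure of $X_\omega$ and closing up via $\sim$ produces the required pair of adjacent triangles in $X_\omega$ with the same endpoints $a, b$ as the original edge, and the injectivity of $\phi$ on this part reduces to the observation that such boundary-crossing pairs are individuated by the underlying triangle of $W_{\omega'}$, which remains distinct in $X_\omega$ by the same subcomplex argument.
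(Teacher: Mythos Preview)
Your first two paragraphs are essentially the complete argument, and match the paper's one-line proof (``This follows directly from the definition of $R(\omega)$''). The subcomplex inclusion $W_{\omega'}\hookrightarrow W_\omega$ carries labels, triangles, and adjacent pairs of triangles injectively, and that is all that is needed.

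The ``main obstacle'' you identify is a misreading of the definition. Although the paper writes ``edges of $X_\omega$'', the graph $R(\omega)$ is computed from the \emph{open} cobordism, not from the quotient $X_\omega=W_\omega/{\sim}$. This is visible in every figure: $R(X_{00})$, $R(Y_{00})$, $R(X_{01}X_{00})$, $R(X_{00}Y_{00})$ all carry separate left strands $x,z$ and right strands $x',z'$; had the boundary collars been identified, these would collapse. The paper's notation is admittedly inconsistent here (note that the proof of Lemma~\ref{L - cycle Z2} writes ``$G_\omega=\pi_1(X_\omega/{\sim})$'', using $X_\omega$ for the pre-quotient), but the intended object is unambiguous from the pictures and from the surgery description of $R(\omega)$ as a concatenation of the pieces $R(X_{00})$, $R(Y_{00})$ with strand-flips.

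Under the correct reading there are no boundary-crossing edges in $R(\omega')$, so your third paragraph is unnecessary. It is also not correct as written: the claim that ``tracing around the cyclic structure of $X_\omega$ \ldots\ produces the required pair of adjacent triangles with the same endpoints $a,b$'' is unjustified. Once the right collar of $W_{\omega'}$ is glued to $W_{\omega_2}$ rather than back to its own left collar, the triangle adjacent across that collar lies in $W_{\omega_2}$ and carries labels determined by $\omega_2$; there is no reason these should reproduce the original endpoint $b$. So if $R$ really were defined on the quotient, the lemma would not follow from your argument (and indeed would be false in general).
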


This follows directly from the definition of $R(\omega)$. 
The following lemma describes the effect of the flip.

\begin{lemma}\label{L - flip}
Let $Z,T\in \{X,Y\}$ and $i,j,k,l\in \{0,1\}$. Then $R(Z_{ij}T_{kl})$ is obtained from the graph $R(Z_{i\bar j}T_{kl})$ by a surgery that flips the subgraph $R(Z_{ij})$ horizontally.  
\end{lemma}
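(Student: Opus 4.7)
The plan is to work directly from the definition of $R(\omega)$ and the recipe for composing cobordisms. By construction, in the complex $X_{Z_{ij}T_{kl}}$ the right collar boundary of $Z$ (attached by $R_Z\circ\sigma^{j}$) is identified with the left collar boundary of $T$ (attached by $L_T\circ\sigma^{k}$). Replacing $j$ by $\bar{j}$ precomposes the $Z$-side gluing with the involution $\sigma$, which amounts to relabeling the edges on that boundary via Lemma~\ref{L - involution}.

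First, I would exhibit $R(Z_{ij}T_{kl})$ as the graph obtained from the disjoint union $R(Z_{ij})\sqcup R(T_{kl})$ by identifying each ``right-boundary vertex'' of $R(Z_{ij})$ (a label lying on $R_Z(C)$) with the corresponding ``left-boundary vertex'' of $R(T_{kl})$ (a label on $L_T(C)$). The identification is prescribed by the gluing map; any edge of $R$ straddling the common collar is then forced by the pairs of adjacent triangles appearing across the gluing. Lemma~\ref{L - subword} is what guarantees that both $R(Z_{ij})$ and $R(T_{kl})$ embed as subgraphs.

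Second, I would check that $\sigma$ acts on the standard picture of $R(Z_{ij})$ as a horizontal reflection. Since $\sigma$ swaps $x\leftrightarrow z$, $x'\leftrightarrow z'$, $a\leftrightarrow b$, $c\leftrightarrow d$ and (up to orientation) fixes $y$, $y'$, its restriction to the $R_Z(C)$-vertices exchanges left-right paired labels. A direct inspection of the cobordisms $X$ and $Y$ from \cite[\ts 10]{surgery} (exemplified on the pictures of $R(X_{00})$ and $R(Y_{00})$, where $x,z$ sit on the left and $x',z'$ on the right) shows that this permutation extends consistently to a graph automorphism of $R(Z_{ij})$ that geometrically realizes the horizontal flip.

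Assembling the two steps, passing from $\omega'=Z_{i\bar{j}}T_{kl}$ to $\omega=Z_{ij}T_{kl}$ only alters how the right-boundary vertices of $R(Z)$ are identified with the left-boundary vertices of $R(T)$, and the alteration is exactly the permutation induced by $\sigma$. Transferring the action from the gluing to the $Z$-side, this is the same operation as replacing $R(Z_{ij})\subset R(\omega')$ by its horizontal flip before regluing to $R(T_{kl})$, which is the stated surgery. The main obstacle will be the second step: verifying that $\sigma$ really induces a horizontal-flip automorphism of $R(Z_{ij})$ rather than merely a vertex permutation. This amounts to tracing $\sigma$ through the explicit triangle data listed after Lemma~\ref{ L - generator equivalence} and matching triangles on either side of the would-be mirror axis; once this symmetry is confirmed for $Z\in\{X,Y\}$, the remainder of the lemma is formal.
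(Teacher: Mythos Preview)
Your decomposition in the first step is correct and is essentially the content of the paper's argument: changing $j$ to $\bar j$ precomposes the right gluing map by $\sigma$, and since by Lemma~\ref{L - involution} the involution $\sigma$ swaps $x\leftrightarrow z$ and $x'\leftrightarrow z'$, this simply swaps which right-boundary vertex of $R(Z)$ is attached to which left-boundary vertex of $R(T)$. That is literally all the paper says in its proof.

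The problem is your second step, which you yourself flag as the main obstacle. The claim that $\sigma$ extends to a graph automorphism of $R(Z_{ij})$ is \emph{false} for $Z=X$. In $R(X_{00})$ the vertex labelled $2$ carries a loop and has degree $4$, while every other vertex has degree $1$; any automorphism must therefore fix $2$ and hence permute its neighbours $\{z,x'\}$ among themselves. No such automorphism can send $z\mapsto x$ and $x'\mapsto z'$. So the ``direct inspection'' you propose will not succeed, and the difficulty you anticipate is genuine---but only because you are asking for more than the lemma states.

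The ``horizontal flip'' in the statement is not a symmetry of the subgraph $R(Z_{ij})$; it is a redrawing. One takes the same abstract graph, reflects the picture about the horizontal axis (so the strands labelled $x',z'$ on the right swap positions), and reglues to $R(T_{kl})$ accordingly. The only fact needed is that $\sigma$ swaps $x'\leftrightarrow z'$ on the right collar, which is exactly Lemma~\ref{L - involution}. Once you drop the automorphism claim, your argument collapses to the paper's one-line proof.
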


\begin{proof}[Proof of Lemma \ref{L - flip}]
By Lemma \ref{L - involution}, the automorphism $\sigma$ of $C$ exchanges $x\lra z$ and $x'\lra z'$.
\end{proof}

\begin{lemma}\label{L - cyclic permutations}
Let $\omega,\omega'\in E$ and assume that $\omega'$ is a cyclic permutation of $\omega$. Then  
$G_{\omega'}\simeq G_{\omega}$.
\end{lemma}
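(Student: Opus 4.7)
The plan is to show the stronger statement that the pointed topological spaces $X_\omega$ and $X_{\omega'}$ are naturally homeomorphic (up to the base point, which is irrelevant for $\pi_1$ on a connected space). By induction on the number of elementary cyclic shifts, it is enough to treat the case of a single shift: write $\omega=\alpha\beta$ as a concatenation of two non-empty subwords in the generators $X_{ij},Y_{00},Y_{01}$, and set $\omega'=\beta\alpha$.

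Let $X_\alpha\colon C\to C$ and $X_\beta\colon C\to C$ be representatives of the cobordism compositions corresponding to $\alpha$ and $\beta$, with boundary maps $L_{X_\alpha},R_{X_\alpha},L_{X_\beta},R_{X_\beta}$. Unwinding the definition of composition in $\Bord_{\frac 7 4}$ and of the equivalence relation $\sim$ that identifies the two copies of $C$, the space $X_\omega=(X_\alpha\circ X_\beta)/\sim$ is the quotient of the disjoint union $X_\alpha\sqcup X_\beta$ by the two identifications
\[
R_{X_\alpha}(C)\equiv L_{X_\beta}(C)\quad\text{(from composition)},\qquad L_{X_\alpha}(C)\equiv R_{X_\beta}(C)\quad\text{(from $\sim$)},
\]
each performed via the identity of $C$ (combined with the appropriate powers of $\sigma$ coming from the labels $ij$, which are already encoded in $L_{X_\alpha}$, etc.). Applying the same unwinding to $X_{\omega'}=(X_\beta\circ X_\alpha)/\sim$ yields the quotient of $X_\beta\sqcup X_\alpha$ by $R_{X_\beta}(C)\equiv L_{X_\alpha}(C)$ (from composition) and $L_{X_\beta}(C)\equiv R_{X_\alpha}(C)$ (from $\sim$). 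These are manifestly the same pair of identifications on the same disjoint union, so the obvious map swapping the two pieces induces a homeomorphism $X_\omega\to X_{\omega'}$.

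Since $X_\omega$ is path-connected, this homeomorphism induces an isomorphism $G_\omega\simeq G_{\omega'}$ on fundamental groups, independently of the base points chosen in Definition~\ref{D - semigroup E}.

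There is no real obstacle here: the only point requiring care is to verify that the twists $\sigma^i$ built into the generators do not interfere with the argument. They do not, because those twists are baked into the left/right boundary maps of each individual cobordism factor $X_\alpha$ and $X_\beta$, and the two identifications on $X_\alpha\sqcup X_\beta$ described above are symmetric under swapping the roles of $\alpha$ and $\beta$.
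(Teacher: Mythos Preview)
Your proof is correct and follows exactly the approach sketched in the paper, which simply asserts that there is an isomorphism at the level of complexes $X_{\omega'}\simeq X_\omega$; you have supplied the details of that assertion by unwinding the two gluings on $X_\alpha\sqcup X_\beta$ and observing their symmetry under swapping $\alpha$ and $\beta$. The remark about the induction on elementary shifts is harmless but unnecessary, since your argument for a general split $\omega=\alpha\beta\mapsto\omega'=\beta\alpha$ already covers every cyclic permutation in one step.
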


Indeed, we have in fact an isomorphism at the level of complexes $X_{\omega'}\simeq X_\omega$.

\begin{lemma}\label{L - cycle Z2}
If $R(\omega)$ contains a (non necessarily simple) cycle $\gamma$ then at least one of the following two assertions holds:
\begin{enumerate}
\item $\gamma$ is supported on a loop of $R(\omega)$
\item $\ZI^2$ embeds in $G_\omega$. 
\end{enumerate}
\end{lemma}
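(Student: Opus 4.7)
The plan is to geometrically realize each edge of $R(\omega)$ as a flat Euclidean cylinder inside $X_\omega/\sim$, and to interpret a cycle of $R(\omega)$ as a closed chain of such cylinders that assembles into a flat torus. By the definition of $R(\omega)$, an edge $e$ between vertices $a$ and $b$ is a pair of adjacent triangles whose two outer opposite sides carry a common label $c$ and whose remaining two sides carry labels $a$ and $b$. The two $c$-labeled sides get identified in $X_\omega/\sim$, so this pair of triangles descends to a flat Euclidean cylinder $\hat P_e \subset X_\omega/\sim$ whose two boundary circles are the edges labeled $a$ and $b$.

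When two edges of $R(\omega)$ meet at a common vertex $v$, their corresponding cylinders share the boundary circle labeled $v$. Therefore any cycle $\gamma = e_1 e_2 \cdots e_n$ in $R(\omega)$ glues the cylinders $\hat P_{e_1}, \ldots, \hat P_{e_n}$ successively along boundary circles, producing a closed flat surface $F_\gamma \subset X_\omega/\sim$. The surface carries two natural periodic directions: one going around an individual cylinder along the identified $c$-edges, and one going along the full cyclic chain. Depending on the orientations accumulated along $\gamma$, the surface $F_\gamma$ is a flat torus or a flat Klein bottle; in both cases $\pi_1(F_\gamma)$ contains a $\ZI^2$ subgroup.

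Assuming $\gamma$ is not supported on a self-loop of $R(\omega)$, the two translations above are genuinely independent, so $F_\gamma$ provides a nontrivial $\ZI^2$ in $\pi_1(F_\gamma)$. Since $X_\omega/\sim$ is locally CAT(0) (vertex links are Moebius--Kantor graphs, giving nonpositive curvature) and $F_\gamma$ is flat and totally geodesic, its lift to the universal cover is an isometrically embedded flat plane; the inclusion $F_\gamma \hookrightarrow X_\omega/\sim$ is therefore $\pi_1$-injective, and $\ZI^2 \hookrightarrow G_\omega$ follows by the flat torus theorem. The degenerate case is precisely when $\gamma$ uses only self-loops of $R(\omega)$: a self-loop connects a vertex to itself, so the chain direction fails to advance through distinct cylinders and the second translation collapses, leaving assertion (1) as the only conclusion available.

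The main obstacle will be verifying that the gluings along the chain of cylinders are compatible, so that $F_\gamma$ is a locally isometric flat surface in $X_\omega/\sim$ rather than an immersion with folds or singular cone points along the shared boundary circles. This amounts to checking that consecutive cylinders $\hat P_{e_i}$ and $\hat P_{e_{i+1}}$ glue along the common edge labeled $v$ with matching orientations dictated by the surgery, and will require tracking the label and orientation data through the explicit generators $X_{ij}, Y_{ij}$, using Lemma \ref{L - involution} and Lemma \ref{L - flip} to control the effect of the involution $\sigma$ on these identifications.
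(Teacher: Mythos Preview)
Your approach is essentially the paper's: interpret each edge of $R(\omega)$ as a flat parallelogram cylinder in $X_\omega$, concatenate them along a cycle into a torus, and extract $\ZI^2$. Two remarks on execution.

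First, the ``main obstacle'' you flag at the end is overstated. You do not need to track orientations through the generators $X_{ij},Y_{ij}$ via Lemma~\ref{L - involution} or Lemma~\ref{L - flip}. The cylinders are built from equilateral triangles with unit edges, so any two cylinders sharing a boundary circle glue along an edge of the same length, and the resulting strip is locally flat regardless of the surgery data. The paper accordingly does no such bookkeeping: it simply asserts that consecutive edges of $\gamma$ give consecutive cylinders, hence a (possibly non-embedded) torus.

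Second, the paper inserts a small reduction step you omit: before building the torus it replaces $\gamma$ by a cycle in which any two consecutive edges are distinct. This guarantees that adjacent cylinders in the chain are genuinely different in $X_\omega$, so the ``chain direction'' translation is nontrivial. Your argument that the chain direction is nondegenerate whenever $\gamma$ is not supported on a self-loop is not quite sufficient as stated, since a non-simple cycle could backtrack along a non-loop edge; the reduction step is what rules this out. With that adjustment your proof matches the paper's, and your explicit appeal to CAT(0) geometry for $\pi_1$-injectivity is a welcome clarification of a point the paper leaves implicit.
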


\begin{proof}[Proof of Lemma \ref{L - cycle Z2}]
Let $\gamma$ be a cycle of $R(\omega)$. By definition of $R(\omega)$, two consecutive edges of $\gamma$ correspond to two consecutive cylinders in $X_\omega$. If (1) fails then $\gamma$ visits are least two distinct edges of $R(\omega)$. Let us reduce  $\gamma$ to a new cycle (still denoted $\gamma$) with the property that any two consecutive edges are distinct (i.e., they have distinct labels). In that case, the two tori cylinders associated with these edges are also distinct in $X_\omega$. Since the length of $\gamma$ is at least 2, this gives a (not necessarily embedded) torus in $X_\omega$, which in turns provides a copy of $\ZI^2$ in $G_\omega = \pi_1(X_\omega/\sim)$.    
\end{proof}

Let us assume first that the word $\omega$ is a power of $X_{00}$, or of $X_{11}$. In that case it is easily seen that $X_\omega/\sim$ is a finite normal cover of the complex $V_1$ defined in \cite{rd} (which is therefore accessible by surgery). As we checked in \cite{rd}, $\pi_1(V_1)$ contains $\ZI^2$, and therefore, $G_\omega$ contains $\ZI^2$ (explicitly, if $a_1,\ldots, a_8$ denotes the generators of the presentation given in \cite[\ts 4]{rd}, then $a_7$ commutes to $a_4a_5$).
The same argument works if $\omega$ is a power of $Y_{00}$, and it is not hard to check that the corresponding complex in \cite{rd} is $V_0^1$.
If $\omega$ is not a power of $X_{00}$, $X_{11}$, or of $Y_{00}$, then by Lemma \ref{ L - generator equivalence 0}, it contains either $X_{01}X_{00}$, $X_{00}Y_{00}$, or $Y_{00}Y_{00}$, as a subword, up to isomorphism of the corresponding graphs $R$. For instance, $R(X_{00}Y_{00})$ is a reflection of $R(Y_{00}X_{00})$, and $R(X_{00}Y_{00})$ and $R(X_{00}Y_{01})$ coincide up to to switching the right strands, by Lemma \ref{L - flip}.

The graph $R(X_{01}X_{00})$ is given by:

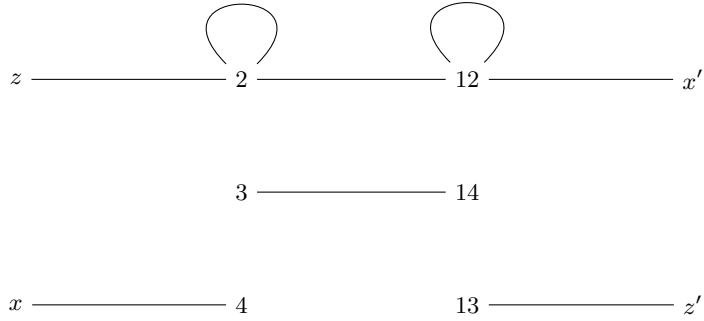
\begin{figure}[H]
\begin{tikzpicture}[shift ={(0.0,0.0)},scale = 3.0]
\tikzstyle{every node}=[font=\small]

\node (v12) at (2,1){$12$};
\node (v202) at (3,0){$z'$};
\node (v14) at (2,.5){$14$};
\node (v2) at (1,1){$2$};
\node (v4) at (1,0){$4$};
\node (v200) at (3,1){$x'$};
\node (v100) at (0,0){$x$};
\node (v102) at (0,1){$z$};
\node (v3) at (1,.5){$3$};
\node (v13) at (2,0){$13$};
\draw[solid,thin,color=black,-](v2) to[loop] (v2);
\draw[solid,thin,color=black,-] (v100) -- (v4);
\draw[solid,thin,color=black,-] (v102) -- (v2);
\draw[solid,thin,color=black,-](v12) to[loop](v12);
\draw[solid,thin,color=black,-] (v3) -- (v14);
\draw[solid,thin,color=black,-] (v2) -- (v12);
\draw[solid,thin,color=black,-] (v12) -- (v200);
\draw[solid,thin,color=black,-] (v13) -- (v202);

\end{tikzpicture}  
\caption{The graph $R(X_{01}X_{00})$}\label{Fig - X01X00}
\end{figure}

Therefore, $R(X_{01}X_{00})$ contains a cycle whose support is not reduced to  a loop. By Lemma \ref{L - subword}, if $\omega$ contains $X_{01}X_{00}$ then $R(X_{01}X_{00})$ embeds in $R(\omega)$, and Lemma \ref{L - cycle Z2} shows that $\ZI^2$ embeds in $G_\omega$. 

From now on we assume that $\omega$ contains either $X_{00}Y_{00}$ or $X_{00}Y_{01}$ as a subword, but not $X_{01}X_{00}$. The graph  $R(X_{00}Y_{00})$ can be represented as follows.

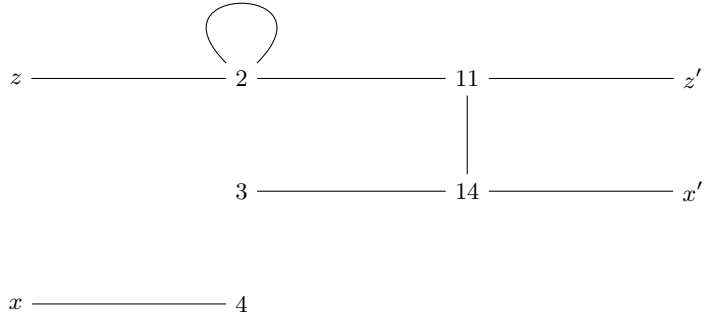
\begin{figure}[H]
\begin{tikzpicture}[shift ={(0.0,0.0)},scale = 3.0]
\tikzstyle{every node}=[font=\small]

\node (v202) at (3.0,1){$z'$};
\node (v14) at (2,.5){$14$};
\node (v2) at (1,1){$2$};
\node (v4) at (1,0){$4$};
\node (v200) at (3,0.5){$x'$};
\node (v100) at (0,0){$x$};
\node (v102) at (0,1){$z$};
\node (v11) at (2,1){$11$};
\node (v3) at (1,.5){$3$};
\draw[solid,thin,color=black,-](v2) to[loop](v2);
\draw[solid,thin,color=black,-] (v100) -- (v4);
\draw[solid,thin,color=black,-] (v102) -- (v2);
\draw[solid,thin,color=black,-] (v11) -- (v14);
\draw[solid,thin,color=black,-] (v2) -- (v11);
\draw[solid,thin,color=black,-] (v3) -- (v14);
\draw[solid,thin,color=black,-] (v14) -- (v200);
\draw[solid,thin,color=black,-] (v11) -- (v202);

\end{tikzpicture}  
\caption{The graph $R(X_{00}Y_{00})$}
\end{figure}

The word $\omega$ is either of the form $(X_{00})^kY_{00}$ for some $k\geq 1$, or there exists a letter $Z$ such that either 
$X_{00}Y_{00}Z$ or $X_{00}Y_{01}Z$ appears as a subword  of $\omega$. If $\omega'$ denotes the latter subword, then the graph $R(\omega')$ contains a cycle which is not a loop, so $G_\omega$ contains $\ZI^2$ in the second case. In the first case, Lemma \ref{L - cyclic permutations} applies to show that   $G_\omega$ contains $\ZI^2$ if $k\geq 2$. Finally, it is easy to check that $\ZI^2\inj G_\omega$ if $\omega=X_{00}Y_{00}$. 

This concludes the proof of Theorem  \ref{T - Z2 embeds}.

\section{Proof of Theorem \ref{T - Random 74}}\label{S - proof theorem 2}

The proof of Theorem \ref{T - Z2 embeds} suggests that stronger results should hold, perhaps only requiring that they  hold for ``almost all'' groups $G_\omega$. 

Let $n$ be an integer. Let $E_n$ denote the sphere of radius $n$ in the semigroup $E$, with respect to the generating set consisting of $X_{ij},  Y_{0k}$, for  $i,j,k\in \{0,1\}$. Let $\IP_n$ denote the uniform probability measure on $E_n$. 

In the present section we consider groups of the form $G_\omega$ where the parameter $\omega$ is chosen at random in $E_n$, with respect to $\IP_n$, for a large enough integer $n$.

\begin{definition}\label{D - overwhelming proba}  
If $P$ is a group property and 
\[
\IP_n\{\omega\in S_n,\ G_\omega\text{ has property } P\}
\]
converges to 1, we say that the random group of rank \sq\ has property $P$ with overwhelming probability (with respect to $(\IP_n)$). 
\end{definition}

Let us now recall the notion of polynomial and exponential rank growth from \cite[Definition 2]{rd}. As in \cite{rd} we shall refer to CAT(0) 2-complex built from equilateral triangles simply as triangle complexes.  
 
\begin{definition}\label{def2}
A triangle complex $X$ is said to have \emph{polynomial rank growth} (resp.\ \emph{subexponential rank growth}) if there exists a polynomial $p$ (resp.\ a function $p$ such that $\lim_r p(r)^{1/r}=1$) such that for any simplicial geodesic segment $\gamma$ in $X$, the number of flat equilateral triangles in $X$ with base $\gamma$ is bounded by $P(r)$, where $r$ is the length of $\gamma$. 
\end{definition}

A triangle group is said to have polynomial rank growth (resp. subexponential rank growth) if it admits a proper and isometric action on a triangle complex of polynomial rank (resp.\ subexponential rank growth), and it is said to be of exponential rank when it is not of subexponential rank growth.
We shall abbreviate `exponential rank growth' to `exponential rank'. 

Our goal in this section is to prove the following result.

\begin{theorem}\label{T - exponential rank}
The group $G_\omega$ has exponential  rank with overwhelming probability.
\end{theorem}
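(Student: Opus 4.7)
The plan is to show, with overwhelming probability in $\omega\in E_n$, the existence of a simplicial geodesic segment $\gamma$ in $X_\omega/\sim$ of length $\ell_n\to\infty$ supporting at least $c^{\ell_n}$ distinct flat equilateral triangles, for some fixed $c>1$. In view of Definition \ref{def2}, this immediately rules out subexponential rank growth and establishes exponential rank. The heart of the matter is to upgrade the combinatorial analysis of $R(\omega)$ carried out in the proof of Theorem \ref{T - Z2 embeds}: whereas embedding $\ZI^2$ required only a single nontrivial cycle of $R(\omega)$, exponential rank will require that $R(\omega)$ carries exponentially many paths of length $\Omega(n)$ sharing a common initial edge.

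First I would revisit the correspondence between paths in $R(\omega)$ and flats in $X_\omega/\sim$: a path of length $r$ in $R(\omega)$ concatenates $r$ small cylinders into a flat strip whose spine is a simplicial geodesic $\gamma$ of length $\Omega(r)$, and a diagonal of the strip produces a flat equilateral triangle with base $\gamma$. Two paths agreeing on an initial segment but diverging later produce triangles with the same base but apices in distinct $2$-cells. Then I would exhibit a short word $\omega_0$ in the generators such that $R(\omega_0)$ contains a vertex of degree $\geq 3$ and, when $\omega_0$ is iterated, the branching compounds: a candidate is a length-$2$ or length-$3$ product among $X_{ij},Y_{0k}$ whose $R$-graph (as computed in \ts\ref{S - theorem 1}, compare Figure \ref{Fig - X01X00}) exhibits a trivalent vertex on an edge that can be continued in more than one way by the splicing rules encoded in Lemma \ref{L - subword} and Lemma \ref{L - flip}. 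Iterating $m$ times should then give in $R(\omega_0^m)$ a rooted subgraph with $\geq 2^{cm}$ distinct rooted paths of length $m$.

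Next, a Chernoff-type concentration argument shows that for $\omega$ uniform in $E_n$ the fixed word $\omega_0$ appears as a contiguous subword of $\omega$ at $\Omega(n)$ disjoint positions with overwhelming probability. Decompose $\omega$ into $\lfloor n/|\omega_0|\rfloor$ consecutive blocks of length $|\omega_0|$; each block equals $\omega_0$ with probability bounded below by a constant, after identifying $\IP_n$ with the uniform measure on length-$n$ words in the six generators via the free-product structure of the ambient monoid noted in the remark after Lemma \ref{ L - generator equivalence}. Lemma \ref{L - subword} embeds each occurrence of $R(\omega_0)$ into $R(\omega)$, and because the blocks are arranged linearly along $\omega$ the branching configurations concatenate end-to-end, so that $R(\omega)$ contains a rooted subgraph of depth $\Omega(n)$ with $\geq c^n$ rooted paths from the root. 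By the previous step, this yields $\geq c^n$ pairwise distinct flat equilateral triangles in $X_\omega/\sim$ sharing a common base $\gamma$ of length $\Omega(n)$.

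The main obstacle I anticipate is the distinctness claim in the first step: combinatorially different paths in $R(\omega)$ must be certified to produce \emph{metrically} distinct flat triangles in the quotient $X_\omega/\sim$, meaning that the surgery identifications at the cobordism boundaries do not collapse the branched paths back together. Resolving this requires tracking the labels carried along each branch by means of the explicit face lists for $X_{00}$ and $Y_{00}$ recorded in the remark following Lemma \ref{ L - generator equivalence}, and showing that distinct label-sequences cannot be identified under $\sim$. This bookkeeping is of the same flavour as the label-tracking performed for the reference complexes $V_0, V_0^1, V_1$ in \cite[\ts 4]{rd}, and I expect the argument here to reduce to an analogous branching count.
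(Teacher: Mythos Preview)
Your proposal has a genuine conceptual gap in the very first step. Exponential rank is an asymptotic property of a \emph{fixed} triangle complex: to rule out subexponential rank growth for $G_\omega$ you must exhibit, for an unbounded sequence of lengths $r$, geodesics of length $r$ in $\tilde X_\omega$ supporting $\geq c^r$ flat equilateral triangles. Your plan produces, for each $\omega\in E_n$, a single geodesic of one particular length $\ell_n$ with $c^{\ell_n}$ triangles. For fixed $\omega$ this is a single data point, not an asymptotic; it does not contradict the existence of a subexponential $p$ in Definition~\ref{def2}. You are conflating the parameter $n$ (the word length defining the random model) with the parameter $r$ (the scale in the rank-growth definition). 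Relatedly, the flat triangles must be counted in the CAT(0) universal cover $\tilde X_\omega$, not in the compact quotient $X_\omega/\!\sim$.

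The paper's route repairs and dramatically simplifies this. Instead of accumulating $\Omega(n)$ branching configurations and invoking a Chernoff bound, the paper observes (Lemma~\ref{L - cycle Z2 - pairs}) that a \emph{single} pair of distinct intersecting cycles in $R(\omega)$, neither supported on a loop, already forces an isometric embedding $T\times\IR\hookrightarrow\tilde X_\omega$ with $T$ a binary tree of uniformly bounded edge lengths. This one structural fact gives exponentially many flat triangles at \emph{every} scale, resolving the asymptotic issue you ran into. The paper then checks (Lemma~\ref{L - forbidden cobordisms}) that a single occurrence of the length-two subword $Y_{0*}Y_{0*}$ (inside any $\omega$ with $|\omega|\geq 3$) already produces such a pair of cycles. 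The probabilistic step therefore reduces to showing that words avoiding $Y_{00}Y_{00}$ have strictly smaller exponential growth than all words, which is a two-term linear recursion yielding $g'=1+\sqrt 3<3=g$; no concentration inequality is needed. Your underlying intuition that branching in $R(\omega)$ is the source of exponential rank is correct, but the mechanism that makes it work at all scales is the presence of \emph{cycles} through the branch point, not the linear accumulation of many disjoint branch points along $\omega$.
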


\begin{lemma}\label{L - cycle Z2 - pairs}
Assume that $R(\omega)$ contains two (non necessarily simple) cycles $\gamma$ and $\gamma'$ such that
\begin{enumerate} 
\item[a)] neither $\gamma$ nor $\gamma'$
 is supported on a loop of $R(\omega)$
\item[b)] $\gamma$ and $\gamma'$ are distinct and have a non-empty intersection
\end{enumerate}
Then   $G_\omega$ has exponential rank. 
\end{lemma}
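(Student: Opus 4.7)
The plan is to exploit the branching forced on $R(\omega)$ by the intersecting cycles to build exponentially many flat equilateral triangles with a common base in the universal cover $Y$ of $X_\omega/\!\sim$. As in the proof of Lemma \ref{L - cycle Z2}, a closed non-backtracking walk in $R(\omega)$ corresponds to a concatenation of cylinders in $X_\omega/\!\sim$ that closes up into a flat torus, equivalently a flat plane in $Y$; more generally, a non-backtracking walk lifts to a flat strip in $Y$. Pick a common edge $e_0\in\gamma\cap\gamma'$, fix a lift $\tilde c_0\subset Y$ of the associated cylinder, and arrange that both flat planes $F_\gamma,F_{\gamma'}$ arising from $\gamma,\gamma'$ contain $\tilde c_0$. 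Since $\gamma,\gamma'$ are distinct and neither is supported on a loop, $F_\gamma\neq F_{\gamma'}$ and the two planes develop flatly past $\tilde c_0$ in genuinely different directions.

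Next, for each binary word $w=(w_1,\dots,w_n)\in\{\gamma,\gamma'\}^n$, consider the concatenated walk $W_w$ in $R(\omega)$ obtained by traversing $w_i$ at the $i$-th return to $e_0$. Distinct words yield distinct non-backtracking walks of combined length at most $n\ell$, where $\ell:=\max(|\gamma|,|\gamma'|)$, and each lifts, starting from $\tilde c_0$, to a flat strip $S_w\subset Y$ of transverse extent $n\ell$; the flat-compatibility at each return to a lift of $c_0$ is inherited from the flatness of $F_\gamma$ and $F_{\gamma'}$ across their shared lifts of the cylinder. Now fix a simplicial geodesic segment $\sigma\subset \tilde c_0$ of length $r$ in the cylinder direction and take $n:=\lceil r\sqrt 3/(2\ell)\rceil$, so each strip has transverse extent at least $r\sqrt 3/2$, the height of a flat equilateral triangle of side $r$. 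Each strip $S_w$ then contains a flat equilateral triangle $T_w$ of side $r$ with base $\sigma$. If $i$ is the first coordinate at which $w,w'$ disagree, the strips $S_w,S_{w'}$ diverge at a height less than $n\ell\leq r\sqrt 3/2 + \ell$, hence within the vertical footprint of the triangles, so $T_w\neq T_{w'}$ as subsets of $Y$. It follows that the number of flat equilateral triangles of side $r$ with base $\sigma$ is at least $2^n\geq 2^{r\sqrt 3/(2\ell)}$, exponentially many in $r$; since $G_\omega$ acts properly cocompactly on $Y$, Definition \ref{def2} yields exponential rank for $G_\omega$.

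The principal obstacle I anticipate is verifying rigorously that distinct words produce distinct flat triangles and not merely distinct strips containing a common triangle. This reduces to a local comparison of the flat developments past a lift of $c_0$ coming from $\gamma$ versus $\gamma'$: these developments correspond to different outgoing edges at $e_0$ in $R(\omega)$, so any divergence between $S_w$ and $S_{w'}$ below height $r\sqrt 3/2$ alters which vertices of $Y$ lie in the triangle interior. A secondary technical point is to promote the strip-lift statement from closed walks (as in the proof of Lemma \ref{L - cycle Z2}) to arbitrary non-backtracking walks; this uses the same cylinder-gluing structure that is encoded by the edges of $R(\omega)$, together with the observation that flat-compatibility is a local condition at each shared edge.
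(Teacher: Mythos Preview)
Your proposal is essentially correct and follows the same geometric idea as the paper: the two intersecting cycles in $R(\omega)$ produce a branching family of flat strips in $\tilde X_\omega$, which yields exponentially many flat equilateral triangles over a fixed base segment. The paper packages this more tersely by asserting an isometric embedding $T\times\IR\hookrightarrow \tilde X_\omega$, with $T$ a metric binary tree of uniformly bounded edge lengths, and declares that this ``clearly implies'' exponential rank; you are in effect spelling out that implication by counting triangles directly.

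One refinement from the paper is worth adopting. Instead of fixing a single common edge $e_0\in\gamma\cap\gamma'$, the paper takes a \emph{maximal} common segment $s=[x,y]$ in $\gamma\cap\gamma'$ and then looks at the outgoing segments $\gamma_x,\gamma_y,\gamma_x',\gamma_y'$ from its endpoints. This guarantees that the branching of the corresponding strips occurs exactly at the lifts of the cylinders over $x$ and $y$, which resolves your ``principal obstacle'' without further local analysis: distinct binary itineraries diverge at a definite, uniformly located height, so the associated triangles are automatically distinct. Your choice of a single edge $e_0$ runs into trouble precisely when $\gamma$ and $\gamma'$ share a longer arc around $e_0$, in which case the strips may agree past $\tilde c_0$ and your divergence estimate becomes delicate; passing to the maximal common segment removes this issue.
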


\begin{proof}  
Consider $\gamma$ and $\gamma'$ satisfying the assumptions and let us prove b). Since $\gamma\cap \gamma'$ is non empty, there exists a maximal (non necessarily simple) segment $s=[x,y]$ in the intersection. Let us consider two segments  $\gamma_x$ (resp.\ $\gamma_y$)  in $\gamma$, defined to be the smallest segment with origin $x$ (respectively $y$) in the direction opposite to $[x,y]$ whose extremity belongs to $\{x,y\}$. We also define similarly two segments $\gamma_x'$ and $\gamma_y'$ in $\gamma'$.   In the case that $\gamma$ is a simple cycle, for example, then $\gamma_x=-\gamma_y$.   The segments $s,\gamma_x,\gamma_y,\gamma_x',\gamma_y'$ in the graph $R(\omega)$ correspond respectively to cylinders  in the complex $X_\omega$. In the universal cover $\tilde X_\omega$, these cylinders give rise to flat strips of uniformly bounded height (by the maximal height of the cylinders). In turn, these flat strips provide an isometric embedding $T\times \IR \inj \tilde X_\omega$, where $T$ is a metric binary tree with uniformly bounded edge lengths.  This clearly implies that $\tilde X$  has exponential rank.
\end{proof}

\begin{lemma}\label{L - forbidden cobordisms}
Assume that $|\omega|\geq 3$. The assumptions of Lemma \ref{L - cycle Z2 - pairs} as satisfied if and only if the word $\omega$ contains either one of the following
\begin{enumerate}[(a)]
\item $Y_{0*}Y_{0*}$
\item $X_{*k}X_{\bar kl}X_{\bar l*}$ 
\item $X_{**}Y_{0*}X_{**}Y_{0*}X_{**}$.
\item $X_{**}Y_{0*}X_{*k}X_{\bar k*}Y_{0*}X_{**}$.
\end{enumerate}
where $*,k,l\in\{0,1\}$.
\end{lemma}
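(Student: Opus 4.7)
The plan is to prove the equivalence by a finite combinatorial inspection of the graph $R(\omega)$, exploiting that $R(\omega)$ is assembled from the two building blocks $R(X_{00})$ and $R(Y_{00})$ glued along their boundary strands, with flips inserted between consecutive blocks according to Lemma \ref{L - flip}. The only edges of $R(\omega)$ that are not part of the ``spine'' (the horizontal path through interior vertices) are the loops at the central vertex of each $X$-block and the ``short-circuit'' edges inside each $Y$-block; every non-loop cycle is built by combining such features with pieces of the spine.

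For the direction $\Leftarrow$, I would, by Lemma \ref{L - subword}, reduce to computing $R$ of each specific pattern (a)--(d) and exhibiting two distinct non-loop cycles with common edge or vertex. In pattern (a), the two short-circuit edges coming from the consecutive $Y_{0*}$ blocks share a spine neighbourhood, yielding the required pair directly. In pattern (b), the index condition $X_{*k}X_{\bar k l}X_{\bar l*}$ is precisely what ensures, via Lemma \ref{L - flip}, that each pair of consecutive $X$-loops is joined by two parallel spine segments; any two of the three resulting cycles then intersect along the middle loop, as was already observed for $R(X_{01}X_{00})$ in Figure \ref{Fig - X01X00}. Patterns (c) and (d) combine these two mechanisms: a short-circuit from each of two $Y$-blocks is connected through an intervening chain of $X$-blocks (with a synchronised loop-pair as in (b) in case (d)) to produce two non-loop cycles sharing a feature in the middle.

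For the direction $\Rightarrow$, the structural observation is that a non-loop cycle in $R(\omega)$ must leave the spine through one feature (loop or short-circuit) and return through another, the two entry points being connected by two distinct spine-arcs. Two distinct intersecting non-loop cycles therefore require at least two features close enough that their spine-arcs overlap non-degenerately. I would run the case analysis by the type and relative position of the two features (short-circuit/short-circuit, loop/loop/loop, loop/short-circuit across $X$-blocks, and the mixed configuration), tracking along the way the flip parity imposed by Lemma \ref{L - flip}. Each surviving case produces exactly one of the patterns (a)--(d); the index conditions $\bar k, \bar l$ in (b) and in the $X_{*k}X_{\bar k*}$ piece of (d) are exactly what is needed for the connecting strands to close up rather than dead-end at a boundary strand.

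The main obstacle is this second direction, and specifically the bookkeeping of flip parities along the word: the loops of consecutive $X$-blocks, or the short-circuits of $Y$-blocks across an intervening $X$-chain, only close up into cycles when the indices align in the way prescribed by the $\bar k, \bar l$ conditions. Showing that no configuration outside (a)--(d) satisfies the alignment requires systematically enumerating all possible positions of two features along the spine and ruling out each mismatched alignment, which I expect to account for most of the length of the proof.
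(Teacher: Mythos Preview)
Your plan is correct and follows essentially the same route as the paper: assemble $R(\omega)$ from the blocks $R(X_{00})$, $R(Y_{00})$ with flips, and read off the cycle structure. Two points are worth flagging. First, your account of case (a) is slightly off: two consecutive $Y$-blocks alone produce only a single $4$-cycle in $R$, not two intersecting ones; the second cycle comes from the third letter guaranteed by the hypothesis $|\omega|\ge 3$, and the paper makes this explicit. Second, for the direction $\Rightarrow$ the paper avoids the long enumeration you anticipate by taking $\gamma,\gamma'$ of minimal length and then the minimal subword $\omega'\subset\omega$ with $\gamma\cup\gamma'\subset R(\omega')$; minimality forces a left strand of $R(\omega')$ to be connected to a right strand, and a single auxiliary observation---that in $R(X_{*k}X_{k*})$ the left and right strands are disconnected---then pins down the index pattern and yields (b), (c), or (d) directly. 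This is exactly your ``flip-parity bookkeeping'' packaged as one clean lemma, and it replaces the case-by-case elimination you were bracing for.
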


\begin{proof}
By the results of the previous section, it is easy to check that the assumptions of Lemma \ref{L - cycle Z2 - pairs}  are satisfied if $\omega$ contains of the prescribed cobordisms, using that the fact that $|\omega'|\geq 3$ in case (a). (For instance, if $|\omega'|=3$, then one can construct two cycles starting from the additional letter $Z$, moving either to the left or to the right in the corresponding subgraph $R(Z)$.)  

Conversely, let $\gamma$ and $\gamma'$ be two cycles of minimal length satisfying the assumptions of the lemma, and let $\omega'\subset \omega$ be the minimal subword of $\omega$ such that $R(\omega')$ contains $\gamma\cup \gamma'$.  Note that if $|\omega'|=3$, then either $\omega'$ contains $Y_{**}Y_{**}$ as a subword, $\omega'=X_{*k}X_{\bar kl}X_{\bar l*}$. Otherwise, $|\omega'|\geq 4$ and $Y_{**}Y_{**}$ is not a subword of $\omega$, which must therefore contains  $X_{**}Y_{**}X_{**}$ every time it contains $Y_{**}$.  The result then follows easily from the next lemma, since by minimality at least one of the left strands in $R(\omega')$ is connected to one of the right strands.
\end{proof}

\begin{lemma}
The left strands in the graph $R(X_{*k}X_{k*})$ are disconnected from the right strands.   
\end{lemma}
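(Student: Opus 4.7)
The plan is to compute the connected components of $R(X_{*k}X_{k*})$ directly and to check that the left strand labels $\{x,z\}$ and the right strand labels $\{x',z'\}$ fall into pairwise distinct components. The crucial point about the assumption $X_{*k}X_{k*}$ is that the second index of the left factor agrees with the first index of the right factor, so the middle identification
$$R_X\circ\sigma^k(u)\ \sim\ L_X\circ\sigma^k(u),\qquad u\in C,$$
reduces after the substitution $v=\sigma^k(u)$ to the untwisted identification $R_X(v)\sim L_X(v)$. By Lemma \ref{L - involution} the outer twists $\sigma^i$ and $\sigma^{j'}$ merely permute the left strand labels $\{x,z\}$ among themselves and the right strand labels $\{x',z'\}$ among themselves, and therefore do not affect the connectivity statement to be proved. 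Consequently it suffices to verify the claim for $R(X_{00}X_{00})$.

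Reading off $L_X$ and $R_X$ from the explicit description of $X_{00}$ given in the remark above, the untwisted middle identification produces precisely four vertex identifications inside the $R$-graph, namely
$$2^{(1)}=x^{(2)},\qquad 3^{(1)}=z^{(2)},\qquad {x'}^{(1)}=4^{(2)},\qquad {z'}^{(1)}=2^{(2)},$$
the remaining two gluings at $y$ and $y'$ involving labels that are not vertices of $R$ at all. Superimposing two copies of the graph $R(X_{00})$ shown earlier along these identifications, a direct inspection yields the connected components
$$\{x^{(1)},4^{(1)}\},\ \ \{z^{(1)},2^{(1)},{x'}^{(1)}\},\ \ \{3^{(1)},{z'}^{(1)},{x'}^{(2)}\},\ \ \{3^{(2)},{z'}^{(2)}\}.$$
The left strand vertices $x^{(1)},z^{(1)}$ lie in the first two of these components, while the right strand vertices ${x'}^{(2)},{z'}^{(2)}$ lie in the last two. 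No component contains both a left and a right strand, which is the desired conclusion.

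The only real content of the argument is this component computation, whose only potential pitfall is the correct bookkeeping of the middle-boundary identifications between the two copies of $R(X_{00})$; once those are written out as above, the separation of left and right strands is immediate from inspection of the resulting graph.
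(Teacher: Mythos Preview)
Your proof is correct. Both you and the paper proceed by inspecting the graph $R(X_{*k}X_{k*})$, and both dispose of the outer twists via the observation that $\sigma$ only permutes $\{x,z\}$ and $\{x',z'\}$ among themselves. The difference lies in how the graph itself is obtained: the paper invokes Lemma~\ref{L - flip} to pass from the already-drawn $R(X_{01}X_{00})$ of Figure~\ref{Fig - X01X00} to $R(X_{*k}X_{k*})$ (the flip of the central strands visibly breaks the path $z$--$2$--$12$--$x'$ in that figure), whereas you compute $R(X_{00}X_{00})$ directly from the explicit presentation of $X_{00}$ in the remark, reading off the four middle identifications $2^{(1)}=x^{(2)}$, $3^{(1)}=z^{(2)}$, ${x'}^{(1)}=4^{(2)}$, ${z'}^{(1)}=2^{(2)}$ and listing the resulting components. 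Your route is more self-contained and makes the component structure fully explicit; the paper's is terser because it reuses the work already done for Figure~\ref{Fig - X01X00}.
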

\begin{proof}
  By Lemma \ref{L - flip}, the graph $R(X_{*k}X_{k*})$ is obtained from $R(X_{*0}X_{1*})$ by flipping the central strands, whose effect is to disconnect the left hand side from the right hand side (see Figure \ref{Fig - X01X00}). 
\end{proof}

The growth of the semi-group $E$ is defined as follows
\[
g:=\lim_{n\to \infty} {|E_n|}^{1/n},
\]
where $E_n$ denotes the sphere of radius $n$ in the monoid $E$. We refer to \cite[Chap.\ VI]{Harpe} for an introduction to the notion of growth.
  
We shall compare $g$ to a slight variation defined as follows. Let $E'_n$ denote the set of cobordisms of length $n$ in $E$ which do not contain $Y_{0*}Y_{0*}$. This is an exponentially growing number of cobordisms, of  which we have to find an upper bound. 
The relevant modification of $g$, in view of the above lemma, is:
\[
g':=\lim_{n\to \infty} {|E_n'|}^{1/n}.
\]
As for the usual growth, the limit converges by submultiplicativity (compare \cite[p.\ 154]{Harpe}.)

\begin{lemma}\label{L - gleqg} $g'<g$. \end{lemma}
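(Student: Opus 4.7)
The plan is to compute both growth rates $g$ and $g'$ explicitly via transfer matrices on pair-classes of elements of $E$ and then observe the strict inequality directly. Using the embedding of $E$ into the ambient monoid $M=\IN*(\ZI/2\ZI\times \IN)$ described in the Remark, the six generators correspond to $X_{00}=a$, $X_{01}=a\sigma$, $X_{10}=\sigma a$, $X_{11}=\sigma a\sigma$, $Y_{00}=Y$, $Y_{01}=Y\sigma$, where $a,Y,\sigma$ satisfy $\sigma^2=1$ and $\sigma Y=Y\sigma$. Since $M$ is a free product of cancellative monoids, it is itself cancellative.

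I would first establish a pairing on $E_n$: the map $e\mapsto e\sigma$ is a fixed-point-free involution on $E_n$, because replacing the final generator in any length-$n$ representation of $e$ by its twin in one of the pairs $(X_{00},X_{01})$, $(X_{10},X_{11})$, $(Y_{00},Y_{01})$ yields a length-$n$ representation of $e\sigma$. A direct case check using $\sigma^2=1$ and $\sigma Y=Y\sigma$ then shows that the six right-extensions of $e$ and the six right-extensions of $e\sigma$ coincide as sets and split into three new twin pairs, namely $(ea,ea\sigma)$, $(e\sigma a,e\sigma a\sigma)$, and $(eY,eY\sigma)$. Among these, two are of ``$a$-type'' (base element ending in $a$) and one is of ``$Y$-type'', regardless of the source type. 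Letting $p_n,q_n$ count the $a$- and $Y$-type pairs in $E_n$, I obtain
\[
\begin{pmatrix} p_{n+1} \\ q_{n+1} \end{pmatrix}=\begin{pmatrix} 2 & 2 \\ 1 & 1 \end{pmatrix}\begin{pmatrix} p_n \\ q_n \end{pmatrix},\qquad (p_1,q_1)=(2,1),
\]
and $|E_n|=2(p_n+q_n)$. Since the transfer matrix has trace $3$ and determinant $0$, its dominant eigenvalue is $3$, so $g=3$.

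For $g'$ I first observe that each $X$-generator contributes an $a$-letter to the normal form of the product while each $Y$-generator contributes exactly one $Y$-letter; hence an element $e\in E_n$ admits a length-$n$ representation containing no substring $Y_{0*}Y_{0*}$ if and only if the normal form of $e$ in $M$ contains no $Y^2$. Re-running the pair count under this constraint, an $a$-type pair in $E'_n$ yields three extension pairs all belonging to $E'_{n+1}$, while a $Y$-type pair in $E'_n$ yields its two $a$-type extension pairs in $E'_{n+1}$ but loses the $Y$-type extension pair $(eY,eY\sigma)$, which acquires a $Y^2$ from the adjacency of the new $Y$ to the trailing $Y$ of $e$. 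Thus
\[
\begin{pmatrix} p'_{n+1} \\ q'_{n+1} \end{pmatrix}=\begin{pmatrix} 2 & 2 \\ 1 & 0 \end{pmatrix}\begin{pmatrix} p'_n \\ q'_n \end{pmatrix},\qquad (p'_1,q'_1)=(2,1),
\]
whose characteristic polynomial $\lambda^2-2\lambda-2$ has dominant root $1+\sqrt{3}$, so $g'=1+\sqrt{3}$.

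Since $\sqrt{3}<2$, we conclude $g'=1+\sqrt{3}<3=g$, which gives the lemma. The main technical obstacle is verifying the pairing structure in the second paragraph, namely that the six right-extensions of $e$ and of $e\sigma$ produce the same three twin pairs; this reduces to the short computation using $\sigma^2=1$, $\sigma Y=Y\sigma$, and right-cancellativity of $M$ which ensures that extensions coming from distinct source pairs never coincide.
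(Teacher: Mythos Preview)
Your proof is correct and follows essentially the same approach as the paper: both compute $g=3$ and $g'=1+\sqrt{3}$ by setting up linear recurrences (yours via a $2\times 2$ transfer matrix on $\sigma$-twin pairs classified by last letter, the paper's via a coupled system involving an auxiliary count $E''_n$ of words in which the forbidden pattern first appears at the end), and both arrive at the same characteristic polynomial $\lambda^2-2\lambda-2$. Your explicit use of the ambient monoid $M$ and the twin-pair involution makes the factor-of-$3$ growth (rather than $6$) more transparent than the paper's terse justification of $|E_{n+1}|=3|E_n|$.
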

\begin{proof} 
Note that a word ending with $X_{\e0}$ or $X_{\e1}$ can lead to 6 possible new cobordisms, which end either with $X_{\e*}X_{**}$ (4 new cobordisms) or with $X_{\e*}Y_{**}$ (2 new cobordisms). The same holds for cobordisms ending with $Y_{00}$ or $Y_{01}$. Thus, $E_{n+1} = 3E_n$ and $g=3$.

Let $E_n''$ denote the set of cobordisms of length $n$ in $E$ which end with $Y_{00}Y_{00}$ but do not contain another copy of  $Y_{00}Y_{00}$. Thus, for instance, $|E_1'|=6$, $|E_2'|=17$, $|E_2''|=1$ and $|E_3''|=5$.

If one adds a letter to a word in $E_n'$ then one obtains either a word in $E_{n+1}'$ or  one in $E_{n+1}''$. Since adding a letter creates three times as many cobordisms, this gives the relation:
\[
|E_{n+1}'|+|E_{n+1}''| = 3|E_n'|.
\]
In order to find another relation, let us add the letters $Y_{00}Y_{00}$ to a word in $E_n'$. This results either in an element of $E_{n+2}''$, or in a word in $E_{n+2}$ which ends with  $Y_{00}Y_{00}Y_{00}$. The corresponding relation can be written as follows:
\[
|E_{n+2}''| +|E_{n+1}''|=|E_n'|.
\]
Solving the order 2 recurrence relation, the roots are
\[
{\sqrt 3}-1 \text{ and }
g'=1+{\sqrt 3}<2.74<g.
\]
\end{proof}

\begin{proof}[Proof of Theorem \ref{T - exponential rank}]
If $\Omega$ denotes the set of words in Lemma \ref{L - forbidden cobordisms}, we have for $n\geq 3$
\begin{align*}
\IP_n(G_\omega\text{ has exponential rank})&\geq \IP_n(\omega\text{ contains a subword in }\Omega)\\
&\geq \IP_n(\omega\text{ contains }Y_{00}Y_{00})\\ 
&= 1 - \IP_n(\omega\text{ does not contain }Y_{00}Y_{00})\\ 
&= 1 - \frac{|E'_n|}{|E_n|}\\ 
&\sim 1 - \left (\frac{g'}{g}\right)^n
\end{align*}
which converges  to 1 exponentially fast.
\end{proof}

\section{Proof of Theorem \ref{T - Random 74 meso}}\label{S - proof theorem 3}

Theorem \ref{T - Random 74 meso} concerns mesoscopic rank, which is defined as follows. 
Let $X$ be a CAT(0) 2-complex and $A$ be a point of $X$ and consider function 
\[
\varphi_A : \RI_+ \to \NI
\] 
which takes $r\in \RI_+$ to the number of flat disks in $X$ of center $A$  which are not included in a flat plane in $X$.

\begin{definition}[See {\cite{rd}}] The function
$\varphi_A$ is called the \emph{mesoscopic profile} at $A$. The 2-complex $X$ is said has \emph{exponential mesoscopic rank} (at $A$) if the function $\p_A$ grows exponentially fast.
\end{definition}

We say that a group has exponential mesoscopic rank if it acts properly uniformly on such a complex having exponential mesoscopic rank at least at one point. Our goal in this section is to prove the following result.

\begin{theorem}
The group $G_\omega$ has exponential mesoscopic  rank with overwhelming probability.
\end{theorem}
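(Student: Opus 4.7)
The plan is to upgrade the probabilistic argument of Theorem \ref{T - Random 74} by locating, with overwhelming probability, a subword of $\omega$ whose associated region in $X_\omega$ reproduces, at an arbitrarily large scale, the local structure of one of the complexes from \cite{rd} for which \cite[Theorem 11(b)]{rd} already establishes exponential mesoscopic rank. Concretely, write $V$ for that complex (either $V_1$ or $V_0^1$ in the notation of \ts\ref{S - theorem 1}). Recall from \ts\ref{S - theorem 1} that when $\omega$ is a power of $X_{00}$ (resp.\ of $Y_{00}$), the complex $X_\omega/\!\sim$ is a finite normal cover of $V_1$ (resp.\ $V_0^1$). Consequently, if $\omega$ contains $(X_{00})^k$ (or $(Y_{00})^k$) as a subword for $k \geq k_0$, there is a basepoint $A\in X_\omega$ and a lift $\tilde A\in \tilde X_\omega$ such that the ball $B(\tilde A, R)$ is isometric to a ball of the same radius in $\tilde V$, where $R=R(k)$ grows linearly in $k$.

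The first step is a purely combinatorial counting argument. Let $E_n^\sharp$ denote the set of words in $E_n$ avoiding the subword $(X_{00})^{k_0}$. Exactly as in Lemma \ref{L - gleqg}, a recurrence relation on the number of words avoiding $(X_{00})^{k_0}$ shows that $\lim_n |E_n^\sharp|^{1/n} = g^\sharp$ with $g^\sharp < g = 3$; standard submultiplicativity arguments give the required upper bound. Hence $\IP_n(\omega$ contains $(X_{00})^{k_0})\geq 1-(g^\sharp/g)^n \to 1$ exponentially fast, for every prescribed $k_0$.

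The second step is a transfer of mesoscopic rank from $\tilde V$ to $\tilde X_\omega$. By \cite[Theorem 11(b)]{rd}, $\tilde V$ carries exponential mesoscopic rank at $\tilde A_0$: there exist a constant $c>1$ and, for each $r$, at least $c^r$ flat disks of radius $r$ centered at $\tilde A_0$ that are \emph{not} contained in any flat plane of $\tilde V$. Each such disk transports via the local isometry $B(\tilde A, R)\simeq B(\tilde A_0, R)$ to a flat disk of radius $r\leq R/2$ centered at $\tilde A$ in $\tilde X_\omega$. The key claim is that a transported disk cannot be contained in a flat plane $P\subset \tilde X_\omega$: indeed, $P\cap B(\tilde A, R)$ would then be a flat disk of radius $\geq R/2$ extending the original, and under the isometry it would produce a flat extension in $\tilde V$. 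Using the mesoscopic scale bound furnished by \cite[Theorem 11(b)]{rd}, choosing $k_0$ (and hence $R$) large enough forces such an extension to propagate to a full flat plane of $\tilde V$ through $\tilde A_0$, contradicting the non-extendability of the original disk.

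The main obstacle is the transfer step: converting the local isometry of balls into a statement about global flat planes requires a finite-scale characterization of when a flat disk extends to a flat plane in a CAT(0) 2-complex of this type. This is exactly the role of $R(\omega)$ in the reduction advertised in the introduction: $R(\omega)$ encodes precisely how the small cylinders (which integrate into flat strips in the universal cover) concatenate across the cobordism seams, so that the condition ``$P\cap B(\tilde A, R)$ extends to a plane of $\tilde X_\omega$'' can be read off from a bounded neighborhood of $R(\omega)$, whose combinatorics is forced by the presence of the long $(X_{00})^{k_0}$ substring to match that of $V$. Once this matching principle is set up, the mesoscopic estimate in \cite[Theorem 11(b)]{rd} can be applied verbatim, and combining it with the exponential decay of $(g^\sharp/g)^n$ yields the theorem.
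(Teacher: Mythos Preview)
Your probabilistic step is fine, but the transfer step has a gap that cannot be repaired within your framework. Exponential mesoscopic rank is an \emph{asymptotic} condition: one needs $\varphi_{\tilde A}(r)$ to grow exponentially as $r\to\infty$. The ball isometry $B(\tilde A,R)\simeq B(\tilde A_0,R)$ coming from the subword $(X_{00})^{k_0}$ has a \emph{fixed} radius $R=R(k_0)$; at best it constrains $\varphi_{\tilde A}(r)$ only for $r\le R$, so no single finite $k_0$ can certify the required growth for all $r$. Your attempt to close this gap---arguing that a transported disk cannot lie in a plane of $\tilde X_\omega$ because the radius-$R$ extension back in $\tilde V$ would ``propagate to a full flat plane''---invokes precisely the statement that mesoscopic rank denies: the content of $\varphi_{\tilde A_0}$ being unbounded is that $\tilde V$ contains flat disks of \emph{every} radius that do not extend to planes. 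There is no ``mesoscopic scale bound'' of the kind you need, and the contradiction does not materialize. The appeal to $R(\omega)$ in your last paragraph does not help either: whether a disk extends to a full plane is a genuinely global question, and the portion of $R(\omega)$ determined by $(X_{00})^{k_0}$ (whose left and right strands are in fact disconnected, cf.\ the lemma following Lemma \ref{L - forbidden cobordisms}) does not encode the flats of $\tilde X_\omega$ passing through that region.

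The paper's reduction to \cite{rd} is of a different nature. It fixes the short subword $\omega_0=Y_{00}Y_{00}Y_{00}$ and shows (Lemma \ref{L - meso}) that whenever $\omega$ contains $\omega_0$, the graph $R(\omega_0)\subset R(\omega)$ carries an explicit $6$-cycle which, by the mechanism of Lemma \ref{L - cycle Z2 - pairs}, produces \emph{infinite} structures directly inside $\tilde X_\omega$: a genuine flat plane $\Pi=\langle\gamma,\gamma'\rangle$ together with an embedding $\gamma\times T\hookrightarrow\tilde X_\omega$ for an exponentially branching tree $T$. These objects are not confined to any finite ball, and the local analysis of \cite[Lemma 57]{rd} then applies verbatim to this configuration to yield exponential mesoscopic rank of $\tilde X_\omega$ at every scale. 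In short, $R(\omega)$ is used to manufacture global flat branching inside $\tilde X_\omega$, not to match a finite ball with $\tilde V$; once Lemma \ref{L - meso} is in hand, your growth-rate comparison (with $\omega_0$ in place of $(X_{00})^{k_0}$) finishes the proof.
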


Consider the group cobordism $\omega_0 := Y_{00}Y_{00}Y_{00}$. The proof of the theorem relies on the following lemma.

\begin{lemma}\label{L - meso}
If a word $\omega$ contains $\omega_0$ as a subword, then $G_{\omega}$ has exponential mesoscopic rank.
\end{lemma}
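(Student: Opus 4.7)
The plan is to reduce the lemma to \cite[Theorem 11(b)]{rd}, which establishes exponential mesoscopic rank for the complex $V_0^1$ associated to powers of the generator $Y_{00}$. Write $\omega = \omega_1 \omega_0 \omega_2$ with $\omega_0 = Y_{00}Y_{00}Y_{00}$. First I would isolate, inside $X_\omega$, the subcomplex $M$ arising from the three consecutive $Y_{00}$-cobordisms of $\omega_0$. By the surgery construction of Section \ref{S - semigroup E} and the identification recorded in Section \ref{S - theorem 1} (where powers of $Y_{00}$ are seen to produce finite normal covers of $V_0^1$), the complex $M$ is isomorphic, as a piece of complex, to the corresponding central portion of $X_{Y_{00}^N}$ for any $N \geq 3$, hence to a region of $\tilde V_0^1$. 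I would then pick a base point $A$ at a vertex in the central $Y_{00}$-cobordism of $M$.

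Next I would import the flat disks provided by \cite[Theorem 11(b)]{rd}: in the universal cover $\tilde V_0^1$ there is a family $\{D_r\}$ of flat disks of radius $r$ centered at a lift $\tilde A$, whose cardinality grows exponentially in $r$, and none of which is contained in a flat plane of $\tilde V_0^1$. By the construction in \cite{rd}, each $D_r$ is assembled from flat strips (small cylinders) that reside within lifts of the $Y_{00}^3$-region. The local inclusion $M \hookrightarrow X_\omega$ lifts to an isometric embedding of a convex neighborhood of (a lift of) $M$ into $\tilde X_\omega$, and carries each $D_r$ to a flat disk of radius $r$ centered at the corresponding lift of $A$ in $\tilde X_\omega$.

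The main obstacle is to verify that the transported disks remain not contained in any flat plane of $\tilde X_\omega$. This is where $R(\omega)$ enters: any flat plane $P \subset \tilde X_\omega$ containing a lifted $D_r$ would decompose along its flat strips into a bi-infinite, doubly-periodic pattern of cylinders in $R(\omega)$ extending the cycles that encode $D_r$. Restricting such a pattern to the central block $R(Y_{00}^3) \subset R(\omega)$, available thanks to Lemma \ref{L - subword}, yields a pattern of the same kind inside $R(V_0^1)$, and hence a flat plane in $\tilde V_0^1$ through $\tilde A$ containing $D_r$, contradicting \cite[Theorem 11(b)]{rd}. The delicate point, which I expect to be the genuine technical core, is showing that the external cobordisms $\omega_1,\omega_2$ cannot themselves produce new flat planes through $D_r$: the small cylinders exiting the $\omega_0$-block are determined by $R(\omega_0)$ alone (and the flip/surgery operations of Lemma \ref{L - flip} act only on strands outside the central block), so a flat plane through $D_r$ would have to coincide on $M$ with a flat plane of $\tilde V_0^1$, which does not exist.

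Once non-extendability is established, the exponential lower bound on $|\{D_r\}|$ from \cite[Theorem 11(b)]{rd} yields exponential mesoscopic rank of $\tilde X_\omega$ at $A$, and hence of $G_\omega$, since $G_\omega$ acts properly uniformly on $\tilde X_\omega$.
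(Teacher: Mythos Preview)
Your proposal has the right high-level idea---reduce to the mesoscopic rank analysis of \cite{rd} for the $Y_{00}$-complex---but the execution diverges from the paper in a way that leaves a genuine gap.

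The paper does \emph{not} attempt to transport the disks $D_r$ from $\tilde V_0^1$ wholesale. Instead it works directly in $\tilde X_\omega$: it writes down an explicit presentation of $\omega_0$, computes $R(\omega_0)$, and singles out the outer $6$-cycle $1$--$14$--$21$--$24$--$11$--$4$. This cycle produces a concrete flat plane $\Pi=\langle\gamma,\gamma'\rangle$ in $\tilde X_\omega$ (with $\gamma=\langle 18\cdot 26\cdot 23\cdot 27\cdot 15\cdot 2\rangle$, $\gamma'=\langle 14\rangle$), and the argument of Lemma~\ref{L - cycle Z2 - pairs} upgrades this to an embedding $\gamma\times T\hookrightarrow\tilde X_\omega$ with $T$ a tree of exponential growth. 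A further explicit parallel geodesic $\gamma_1=\langle 14\cdot 11\rangle$ provides a strip of height $1$ and period $6$ along $\gamma$, extendable to a flat meeting $\Pi$ in a half-plane. With this configuration in hand, the paper invokes the \emph{proof} of \cite[Lemma~57]{rd} verbatim (with different constants): that lemma supplies the local analysis establishing that the resulting disks are not contained in flat planes.

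Your non-extendability argument is where the gap lies. You argue that a putative flat plane $P\supset D_r$ in $\tilde X_\omega$ would decompose into a pattern of cylinders recorded by $R(\omega)$, and then restrict to $R(\omega_0)$ to reach a contradiction. But $R(\omega)$ only encodes flats built from the particular ``small cylinders'' (pairs of adjacent triangles with two opposite edges identically labelled); there is no reason an arbitrary flat plane through $D_r$ must be of this form. The paper flags exactly this limitation just before Theorem~\ref{T - Random 74 meso}: ``this can't be proved using $R(\omega)$ alone.'' The non-extendability is instead handled by the local analysis imported from \cite[Lemma~57]{rd}, applied to the explicit $(\Pi,T,\gamma_1)$ configuration. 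A secondary issue is your transport step: you assert that the disks $D_r$ reside in lifts of the $Y_{00}^3$-region and that a convex neighbourhood of a lift of $M$ embeds isometrically in $\tilde X_\omega$, but neither claim is justified, and for large $r$ one must know precisely which faces the disks use. The paper sidesteps this by building everything from cycles of $R(\omega_0)\subset R(\omega)$, whose associated tori lie in $M$ and lift to genuine (boundary-free) planes inside $\tilde X_\omega$.
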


Note that the lemma implies the theorem, by an argument similar to that of the previous section. namely, if $E'_n$ denote the set of cobordisms of length $n$ in $E$ which do not contain $\omega_0$, and
\[
g':=\lim_{n\to \infty} {|E_n'|}^{1/n}
\]
then an analog of Lemma \ref{L - gleqg} shows that  
\[
g'<g
\]
and therefore
\begin{align*}
\IP_n(G_\omega\text{ has exponential mesoscopic rank})&\geq \IP_n(\omega\text{ contains }\omega_0)\\ 
&= 1 - \IP_n(\omega\text{ does not contain }\omega_0)\\ 
&= 1 - \frac{|E'_n|}{|E_n|}\\ 
&\sim 1 - \left (\frac{g'}{g}\right)^n
\end{align*}
which converges  to 1 exponentially fast.

\begin{proof}[Proof of Lemma \ref{L - meso}] Our goal is to show that Lemma 57 in \cite{rd}, which establishes exponential mesoscopic rank, can be applied in the present situation. 

The following presentation of $\omega_0$ is obtained by pasting together three times the  presentation of $Y_{00}$ given in \ts \ref{S - semigroup E}:
\begin{align*}
&(x,5,6),(y,7,6),(z,7,8),(1,2,3),\   (4,15,16),(2,17,16),(1,17,18),(11,12,13),\\
&\ \ \ \ \ \ \ \ \ (14,25,26),(12,27,26),(11,27,28),(21,26,25), (24,35,36),(22,37,36),(21,37,38)
\end{align*}
\begin{align*}
&(1,6,5),(3,8,5),(4,8,7),(2,4,3),\ (11,16,15),(13,18,15),(14,18,17),(12,14,13),\\
&\ \ \ \ \ \ \ \ \ (23,28,25),(24,28,27),(21,22,23),(22,24,23), (x',36,35),(y',38,35),(z',38,37)
\end{align*}

The graph $R(\omega_0)$ (see \ts\ref{S - theorem 1})  is obtained by gluing  together three copies of the graph $R(Y_{00})$. It is given by

\begin{tikzpicture}[shift ={(0.0,0.0)},scale = 3.0]
\tikzstyle{every node}=[font=\small]

\node (v202) at (4,1){$x'$};
\node (v14) at (2,1){$14$};
\node (v21) at (3,1){$21$};
\node (v4) at (1,0){$4$};
\node (v200) at (4,0){$z'$};
\node (v100) at (0,1){$x$};
\node (v24) at (3,0){$24$};
\node (v102) at (0,0){$z$};
\node (v11) at (2,0){$11$};
\node (v1) at (1,1){$1$};
\draw[solid,thin,color=black,-] (v1) -- (v4);
\draw[solid,thin,color=black,-] (v100) -- (v1);
\draw[solid,thin,color=black,-] (v102) -- (v4);
\draw[solid,thin,color=black,-] (v11) -- (v14);
\draw[solid,thin,color=black,-] (v4) -- (v11);
\draw[solid,thin,color=black,-] (v1) -- (v14);
\draw[solid,thin,color=black,-] (v21) -- (v24);
\draw[solid,thin,color=black,-] (v14) -- (v21);
\draw[solid,thin,color=black,-] (v11) -- (v24);
\draw[solid,thin,color=black,-] (v24) -- (v200);
\draw[solid,thin,color=black,-] (v21) -- (v202);

\end{tikzpicture} 

In order to exhibit exponential mesoscopic rank, consider the outer cycle 
\[
1-14-21-24-11-4
\] 
in this graph. It  generates a flat plane 
\[
\Pi=\ip{\gamma,\gamma'}
\]
supported by two transverse geodesics $\gamma,\gamma'\subset \Pi$. A direct computation shows 
 $\gamma = \ip A$, where 
\[
A = 18\cdot 26 \cdot 23\cdot 27\cdot 15 \cdot 2 
\] 
and $\gamma'=\ip B$, where $B= 14$, are two such geodesics. 

We have to show that this leads to exponential mesoscopic rank. We follow the strategy in \cite[\ts6.1]{rd}.  

 The argument in Lemma  \ref{L - cycle Z2 - pairs} applied to $R(\omega_0)$ shows that the embeddings $\gamma,\gamma'\inj \Pi$ extends to an embedding  $\gamma\times T \inj \tilde X_{\omega_0}$ where $\ip{\gamma, \gamma'}= \Pi$ and $T$ is a tree of exponential growth with uniformly bounded edges.    

\begin{center}
\begin{tikzpicture}[shift ={(0.0,0.0)},scale = 1.0]
\tikzstyle{every node}=[font=\small]

\node (A) at (-1,0){$\gamma'$};
\node (B) at (1,1){$\gamma$};
\node (P) at (5,-3){$\Pi$};

\draw (A) -- ++(0:7cm);
\draw (B) -- ++(-120:5cm);
\draw (5,0) -- ++(60:1cm) -- ++(30:1cm) -- ++(60:.5cm); 
\draw (5,0) -- ++(60:1cm) -- ++(90:.5cm) node[above]{$T$};

\end{tikzpicture}
\end{center}

The presentation of ${\omega_0}$ provides  a geodesic parallel to $\gamma$, namely $\gamma_1=\ip {A_1}$,  where $A_1= 14\cdot 11$, which corresponds to a strip of height 1 and period 6 on $\gamma$. This strip can thus be extended into a flat containing $\gamma$, whose intersection with $\Pi$ is a half-plane.

The proof of Lemma 57 in \cite{rd} then applies verbatim, with different constants,  and this implies that $G_\omega$ has exponential mesoscopic rank  by a local analysis. We will omit the details. 
\end{proof}

\end{document}